\numberwithin{equation}{section}
\newtheorem{theorem}{Theorem}
\newtheorem{prop}{Proposition}[section]
\newtheorem{lemma}[prop]{Lemma}
\newtheorem{claim}[prop]{Claim}
\theoremstyle{definition}
\newtheorem{remark}[prop]{Remark}
\newtheorem{definition}[prop]{Definition}
\newcommand{\R}{\mathbb{R}}
\renewcommand{\div}{\mathrm{div}}
\renewcommand{\det}{\mathrm{det}}
\newcommand{\muz}{\mathbf{\mu}_{0}}
\newcommand{\id}{\mathrm{Id}}
\renewcommand{\phi}{\varphi}
\renewcommand{\epsilon}{\varepsilon}
\newcommand{\D}{\mathrm{D}}
\newcommand{\1}{\mathsf{1}}
\newcommand{\2}{\mathsf{2}}
\newcommand{\hal}{\frac{1}{2}}
\newcommand{\XN}{\mathrm{X}_N}
\newcommand{\YN}{\mathrm{Y}_N}
\newcommand{\FT}{\mathcal{F}_N}
\newcommand{\FTi}{\mathcal{F}_{\infty}}
\newcommand{\Esp}{\mathbb{E}}
\renewcommand{\P}{\mathbb{P}}
\newcommand{\Teps}{\mathrm{T}_{\epsilon}}
\newcommand{\vTeps}{\vec{\mathrm{T}}_{\epsilon}}
\newcommand{\F}{\mathsf{F}}
\newcommand{\kk}{\mathsf{k}}
\newcommand{\Ani}{\mathsf{Ani}}
\newcommand{\Ele}{\mathsf{Ele}}
\newcommand{\Points}{\mathsf{Points}}
\newcommand{\rr}{\mathsf{r}}
\newcommand{\vr}{\vec{\rr}}
\newcommand{\muep}{\mu_{\epsilon}}
\newcommand{\I}{\mathrm{I}}
\newcommand{\II}{\mathrm{II}}
\newcommand{\III}{\mathrm{III}}
\newcommand{\Ocross}{O_{\star}}
\newcommand{\llN}{\ell_N}
\newcommand{\LN}{\mathbf{M}_N}
\newcommand{\bXN}{\mathbf{X}_N}
\newcommand{\EnergiePoints}{\mathsf{EnerPts}} 
\renewcommand{\d}{\mathrm{d}}
\newcommand{\dx}{\d x}
\newcommand{\diagc}{\left(\R^2 \times \R^2\right) \setminus \triangle}
\newcommand{\supp}{\mathrm{supp }}
\newcommand{\El}{\mathrm{E}}
\renewcommand{\r}{\mathsf{r}}
\newcommand{\veta}{\vec{\eta}}
\newcommand{\G}{\mathcal{G}}
\newcommand{\EYz}{\El^{(0, \mathrm{Y})}}
\newcommand{\EYe}{\El^{(\epsilon, \mathrm{Y})}}
\newcommand{\AniYz}{\Ani^{(0)}}
\newcommand{\AniYe}{\Ani^{(\epsilon)}}
\newcommand{\bYN}{\mathbf{Y}_N}
\newcommand{\0}{\mathsf{0}}
\newcommand{\A}{\mathsf{A}}
\newcommand{\vzeta}{\vec{\zeta}}
\newcommand{\PNbeta}{\mathbb{P}_{N}^{\beta}}
\newcommand{\QNbeta}{\mathbb{Q}_{N}^{\beta}}
\newcommand{\ZNbeta}{\mathrm{Z}_{N}^{\beta}}
\newcommand{\Lploc}{\mathrm{L}^p_{\mathrm{loc}}}
\newcommand{\mmuep}{\mathfrak{m}_{\epsilon}}
\newcommand{\mmuz}{\mathfrak{m}_{\mathsf{0}}}
\newcommand{\lN}{\ell_N}
\newcommand{\bphi}{\overline{\varphi}}
\newcommand{\epN}{\epsilon_N}
\newcommand{\cmin}{c_{\mathrm{min}}}
\newcommand{\llangle}{\left \langle}
\newcommand{\rrangle}{\right \rangle}
\renewcommand{\A}{\mathsf{A}_{\mathrm{1}}}
\newcommand{\dN}{\delta_N}
\newcommand{\Rd}{\mathrm{R}_{\delta}}
\newcommand{\tX}{\widetilde{X}_{\delta}}
\newcommand{\mR}{\mathrm{R}}
\newcommand{\nhmu}{\nabla h^{\mu}}
\newcommand{\nhmue}{\nabla h^{\mu}_{\veta}}
\newcommand{\EYze}{\EYz_{\veta}}
\newcommand{\EYee}{\EYe_{\veta}}
\newcommand{\fetai}{\mathsf{f}_{\eta_i}}
\newcommand{\ff}{\mathsf{f}}
\newcommand{\Ll}{\mathrm{L}}
\renewcommand{\aa}{\mathsf{a}}
\def\corO{}
\begin{document}
\title{A local CLT for linear statistics of 2D Coulomb gases}
\author{Thomas Lebl\'{e} and Ofer Zeitouni}
\address{Thomas Lebl\'{e}: Universit\'{e} 
de Paris, CNRS, MAP5 UMR 8145, F-75006 Paris, France.}
\email{thomasleble@gmail.com}
\address{Ofer Zeitouni\\
Department of Mathematics\\
Weizmann institute of Science, Rehovot 76100, Israel.}
\email{ofer.zeitouni@weizmann.ac.il}
\thanks{This project has received funding from the European Research Council (ERC) under the European Union's Horizon 2020 research and innovation programme (grant agreement No. 692452).  T.L. acknowledges the support of the Institute for Advanced Study and the Florence Gould foundation. \\
We thank Sylvia Serfaty for providing us with early drafts of \cite{SerfatyCLT}, and for making some of the statements therein easily quotable for our purposes.
}
\date{\today}
\begin{abstract}
We prove a local central limit theorem for fluctuations of linear statistics of smooth enough test functions under the canonical Gibbs measure of two-dimensional Coulomb gases at any positive temperature. The proof relies on the \textit{global} central limit theorem of \cite{BBNY, LebSerCLT} and a new decay estimate for the characteristic function of such fluctuations.
\end{abstract}
\maketitle
 \section{Introduction}
\subsection{Goal of the paper}
Let $\PNbeta$ be the canonical Gibbs measure (defined below in \eqref{def:Gibbs}) of the two-dimensional Coulomb gas with $N$ particles $\XN = (x_1, \dots, x_N)$ at inverse temperature $\beta > 0$, let $\muz$ be the associated \textit{equilibrium measure}  and let $\bXN := \sum_{i=1}^N \delta_{x_i}$ be the (random) point configuration associated to the $N$ (random) positions of the particles. 
We introduce the random signed measure 
\begin{equation}
  \label{eq-MN}
    \LN := \bXN - N \muz
\end{equation}
and, when $\phi$ is a continuous real-valued test function on $\R^2$, we define the \emph{fluctuations of $\phi$} as:
\begin{equation}
\label{def:fluctuations} \langle \phi, \LN \rangle := \int_{\R^2} \phi(x) \d\LN(x) = \sum_{i=1}^N \phi(x_i) - N \int_{\R^2} \phi(x) \d\muz(x). 
\end{equation}
The papers \cite{LebSerCLT, BBNY} prove a central limit theorem (CLT) for \textit{fluctuations of linear statistics}, namely quantities of the type $ \langle \phi, \LN \rangle$ where $\varphi$ is smooth enough, possibly living at some mesoscopic scale, and supported within the support of $\muz$. The random variable $\langle \phi, \LN \rangle$ converges in distribution  as $N \to \infty$ to a certain Gaussian random variable $Z$ (with explicit mean and variance). The goal of the present paper is to prove a \textit{local} central limit theorem, namely that:
$$
\text{for any $\aa \in (-\infty, +\infty)$, }  \PNbeta\left[  \langle \phi, \LN \rangle \in (\aa - \epsilon_N, \aa + \epsilon_N) \right] \sim_{N \to \infty}  \P\left[  Z \in (\aa - \epsilon_N, \aa + \epsilon_N) \right],
$$
where the characteristic size $\epsilon_N$ of the interval is allowed to go to $0$ at certain speeds that we specify below, depending on the scale at which the test function lives.  

The proof relies on the existing CLT and a crucial new estimate of the decay at infinity of the fluctuations' characteristic function. 
We use the transportation technique of \cite{LebSerCLT, BLS}, 
which is closely related to the loop equations toolbox as in e.g. 
\cite{BG1, BBNY}, but instead of using it to study the Laplace transform of a given fluctuation as in the papers cited, here we turn to study its Fourier transform. We plan to use the  new decay estimate we obtained
 in further investigations of 2D Coulomb gases.

\subsection{Setting}
\label{subsec-settings}
\begin{itemize}
\item The \textit{equilibrium measure} $\muz$ will be taken as a probability measure on $\R^2$ that satisfies: $\muz$ has a compact support $\Sigma$ and possesses a density $\mmuz$ with respect to the Lebesgue measure that is bounded below by a positive constant $\cmin$ on $\Sigma$. We assume that $\Sigma$ is the closure of its interior, that
the boundary $\partial \Sigma$ is a piece-wise $C^1$ curve, and that the density $\mmuz$ is of class $C^{3}$ on $\Sigma$. 

\item The \textit{confining potential} $\zeta$ will simply denote a non-negative continuous function that vanishes on the support $\Sigma$ of $\muz$, is positive outside $\Sigma$, and grows at infinity in a strongly confining
  way, i.e. $\liminf_{|x| \to \infty} \frac{\zeta(x)}{\log|x|} > 2$. We will denote by $\vzeta(\XN)$ the quantity 
$\vzeta(\XN) := 2N \sum_{i=1}^N \zeta(x_i)$.
\end{itemize}
 
\begin{definition}[The two-dimensional Coulomb energy]
\label{def:energy}
To any state of the system $\XN = (x_1, \dots, x_N)$ we associate its \textit{logarithmic energy} $\F(\XN, \muz)$ given by
\begin{equation}
\label{def:FXN}
\F(\XN, \muz) := \hal \iint_{\diagc} - \log|x-y| (\d\bXN - \d\muz)(x) (\d\bXN - \d\muz)(y),
\end{equation}
where $\triangle$ denotes the diagonal in $\R^2 \times \R^2$.  Recalling that $-\log$ is (up to a multiplicative constant) the interaction potential for two-dimensional electrostatics, we can think of $\F(\XN, \muz)$ as being the electrostatic energy of a system consisting of $N$ negative point charges placed at $(x_1, \dots, x_N)$ and a charged background of opposite sign whose density is given by $\muz$.
\end{definition}

\begin{definition}[Canonical Gibbs measure of the two-dimensional Coulomb gas]
\label{def:Gibbs}
For all $\beta > 0$ we consider the probability measure $\PNbeta$ on $\left(\R^2\right)^N$, with density
\begin{equation}
\label{eq-gibbs}
\d\PNbeta(\XN) := \frac{1}{\ZNbeta} \exp\left( - \beta \left( \F(\XN, \muz) + \vzeta(\XN) \right) \right) \d\XN,
\end{equation}
where $\ZNbeta$ is the normalizing constant, or partition function, given by the integral
\begin{equation*}
\ZNbeta := \int_{\left(\R^2\right)^N} \exp\left( - \beta \left( \F(\XN, \muz) + \vzeta(\XN) \right) \right) \d\XN.
\end{equation*}
The measure $\PNbeta$ is called the \textit{canonical Gibbs measure}, and $\beta$ the \textit{inverse temperature parameter}.
\end{definition}

\textit{Henceforth, we fix $N \geq 2$ and an arbitrary value of $\beta > 0$. We let $\XN$ be a random variable in $(\R^2)^N$ distributed according to the Gibbs measure $\PNbeta$, and $\bXN$ be the associated random atomic measure of mass $N$ on $\R^2$.}

\begin{remark}
Often, one defines a Coulomb Gibbs measure by
choosing 
an ``external potential" $V$ that has a certain regularity and growth at infinity. 
That is, given such function $V$, one defines the measure
\begin{equation}
\label{eq-gibbs-alt}
\d\QNbeta(\XN) := \frac{1}{\widehat
\ZNbeta} \exp\left( - \beta \left( \F(\XN, 0) + V(\XN) \right) \right) \d\XN,
\end{equation}
where $\widehat \ZNbeta$ is a normalization constant and $V(\XN)=N \sum_{i=1}^N V(x_i)$.
Classical potential theory then yields, under the above mentioned mild assumptions on $V$, 
the existence of an equilibrium measure $\muz$ so that, on its support,
$\int \log|x-y|d\muz(y)=V(x)+C_{\muz}$, for an appropriate constant $C_{\muz}$. One then obtains 
\eqref{eq-gibbs} with the confining potential $\zeta$ given, off the support of $\muz$, by the
difference of $V$ and  the
 logarithmic potential of $\muz$.
Here we prefer to forget $V$ in order to work directly with the measure itself. Since we are restricting ourselves to test functions supported inside the support $\Sigma$ of $\muz$, the confining potential $\zeta$ will play almost no role. We note that $\muz$ is still the 
``equilibrium measure" of the system in the sense that  as $N \to \infty$,
 the empirical measure $L_N$ converges weakly to $\muz$ almost surely. 
\end{remark}

\subsection{Statements}
\label{sec:statements}
We fix a compactly supported function $\bphi$ of class $C^5$ on $\R^2$ and we take either:
\begin{itemize}
\item (Macroscopic case) $\varphi_N = \bphi$, assuming $\bphi$ supported in the interior of $\Sigma$. In this case we set $\lN := 1$.
\item (Mesoscopic case) $\varphi_N := \bphi\left( \frac{\cdot - \bar{x}_N}{\lN} \right)$, where $\lN$ is such that $N^{-1/2} \ll \lN \ll 1$ and $\{\bar{x}_N\}_N$ is a sequence of points in $\Sigma$ that stays at a certain positive distance from $\partial \Sigma$.
\end{itemize}
We will usually omit the subscript $N$ and simply write $\varphi$, with an implicit dependence on $N$ in the mesoscopic case. 
\begin{remark}
The test functions that we consider always live in the interior of the support of the equilibrium measure, in fact in both the macroscopic and mesoscopic cases their supports stay at some positive distance from the boundary. Allowing the test functions to have a support intersecting (or even close to) the boundary would raise several technical problems: 
\begin{itemize}
\item There is the well-known fact, see e.g. \cite{Pasturcut} in the one dimensional case,
 that a CLT does not hold in general if the support of $\muz$ has more than one connected component (one needs to assume some compatibility conditions on the test function).
\item The effect of such a ``boundary" test function on the equilibrium measure and its support is very subtle, see \cite{serfaty2018quantitative} for an analysis
\item The local laws that control the energy at small scales and that
are needed to treat mesoscopic cases have not been proven close to the boundary.
\end{itemize}
\end{remark}

The main result of the present paper is the following theorem. In it and the rest of the paper,
we use the standard notation $a_N\ll b_N$ to mean that $a_N/b_N\to_{N\to\infty} 0$.
\begin{theorem}[Local CLT for fluctuations]
\label{theo}
Let $Z$ be a Gaussian random variable with mean $b_Z$ equal to
$0$ (in the mesoscopic case) or $ \frac{1}{2\pi} \left( \frac{1}{\beta} - \frac{1}{4} \right) \int_{\R^2} \Delta \varphi(x) \log \mmuz(x) \d x $ (in the macroscopic case), and variance $\sigma_Z^2=\frac{1}{2\pi \beta} \int_{\R^2} |\nabla \phi|^2$.  Let $\{\epN\}_N$ be a sequence that satisfies\footnote{The optimal lower bound probably does not have the $\log N$ factor.}:
\begin{equation}
\label{epNlN}
\frac{\lN^{-2}}{N} \log N \ll \epN \ll 1.
\end{equation}

Then the following \textit{local CLT} 
holds\footnote{We observe that since $Z$ has a Gaussian density the right hand-side of \eqref{localCLT} is bounded above and below by positive constants (depending on $\aa$) times $\epsilon_N$.}: for all real number $\aa$, we have:
\begin{equation}
\label{localCLT}
\PNbeta \left[  \langle \phi, \LN \rangle \in (\aa - \epsilon_N, \aa + \epsilon_N) \right] = \P\left[  Z \in (\aa - \epsilon_N, \aa + \epsilon_N) \right]\left(1 + o_N(1)\right),
\end{equation}
and the convergence is uniform for $\aa$ in
compacts.
\end{theorem}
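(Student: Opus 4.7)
The route from a global CLT to a local CLT at window size $\epN$ goes through the characteristic function $f_N(t) := \Esp_{\PNbeta}[\exp(it\langle\phi,\LN\rangle)]$; let $g_Z(t) := \exp(it b_Z - \sigma_Z^2 t^2/2)$ denote the characteristic function of $Z$. A standard Esseen-type smoothing argument (convolve $\indic_{(\aa-\epN,\aa+\epN)}$ with a mollifier of compactly supported Fourier transform at scale $\delta = o(\epN)$) reduces \eqref{localCLT} to an estimate of the shape
\[
\int_{\R} (f_N(t) - g_Z(t))\,e^{-it\aa}\,h_{\epN,\delta}(t)\,\d t = o(\epN),
\]
where the weight $h_{\epN,\delta}$ is the Fourier transform of the smoothed indicator and satisfies $|h_{\epN,\delta}(t)| \lesssim \min(\epN, 1/|t|)$ together with $\supp h_{\epN,\delta} \subset \{|t| \lesssim 1/\delta\}$. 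I would split the integration at a threshold $|t| = T_N$: below it use the global CLT, above it use a new decay estimate for $f_N$.

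For $|t| \leq T_N$ with $T_N$ diverging slowly in $N$ (e.g.\ a small power of $\log N$), the CLT of \cite{LebSerCLT, BBNY} in its quantitative form yields $f_N(t) = g_Z(t)(1+o_N(1))$ uniformly in $t$. Combined with the continuity of the density of $Z$ at $\aa$, this regime already produces the right-hand side of \eqref{localCLT}. The whole problem then reduces to showing that the tail $\int_{|t| \geq T_N} |f_N(t)|\,|h_{\epN,\delta}(t)|\,\d t$ (plus its negligible Gaussian counterpart) is $o(\epN)$ under \eqref{epNlN}.

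The main obstacle is exactly this tail estimate: a quantitative decay bound for $|f_N(t)|$ on a range of frequencies reaching up to $1/\delta$. The plan is to extend the transportation technique of \cite{LebSerCLT, BLS} to a complex-tilted Gibbs measure, namely to regard $f_N(t)$ as a ratio of partition functions and to interpolate, in a parameter $s \in [0,t]$, along a flow generated by a well-chosen vector field $\vTt$ (taken essentially as the gradient of the Poisson solution of $\div \vTt \propto \phi$). Change of variables in the interpolated partition function produces, at linear order, the Gaussian exponent of $g_Z$; at higher order it produces an anisotropy term, a Jacobian term, and an energy remainder, all of which must be controlled via local laws on the Coulomb gas at the mesoscopic scale $\lN$. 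The crux is to bound these remainders with a loss no worse than $\log N$ relative to the natural mesoscopic scale $N\lN^2$, which is precisely the origin of the factor $\lN^{-2}\log N / N$ appearing in \eqref{epNlN}. Once in hand, this decay bound controls the tail integral and, combined with the CLT regime, yields \eqref{localCLT}.
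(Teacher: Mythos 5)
Your reduction to a Fourier estimate is the same as the paper's: regularize (convolve) at a subscale $\delta \ll \epN$, pass to the frequency side by Parseval/Esseen, split the integral, use the global CLT of \cite{LebSerCLT,BBNY} at low frequencies, and isolate a tail estimate as the new ingredient. You have correctly identified that the whole problem lives in the tail. What is missing, and where the proposal diverges from the paper, is the precise form and the proof strategy of the tail estimate. The paper does not prove a quantitative comparison $f_N(t)\approx g_Z(t)$ uniformly over a growing range of $t$; it proves a standalone decay bound $|\FT(\omega)|\leq C\left(\omega^{-2}+\lN^{-2}/N\right)$ for $|\omega|>1$ (Proposition \ref{prop:FTdecayA}). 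This distinction matters: for $\omega$ large, $g_Z(\omega)$ is exponentially small, so any interpolation argument of the type you sketch (running a flow $s\in[0,t]$ on a complex-tilted measure, as in the global CLT proofs) would have to control the \emph{relative} error $\log f_N-\log g_Z$ uniformly over $|\omega|$ going to infinity; but the remainders produced along the flow accumulate and grow with $t$, while $|g_Z(t)|=e^{-\sigma_Z^2 t^2/2}$ shrinks, so the ratio blows up. That route gives a CLT, not a decay of $|f_N|$.

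The paper's device is different and is the key idea you are missing: it performs a single infinitesimal change of variables $\Teps=\id+\frac{\epsilon}{\omega N}\frac{\nabla\phi}{\mmuz}$, whose $1/\omega$ scaling and whose specific vector field (not the Poisson solution of $\div\vTt\propto\phi$, but $\nabla\phi/\mmuz$, so that $\div(\mmuz\,\vTt)=\Delta\phi$ and the 2D identity $\int-\log|x-y|\Delta f(y)\,\d y=-2\pi f(x)$ produces the statistic $\langle\phi,\LN\rangle$ back) are engineered so that identifying the $O(\epsilon)$ terms yields a first-order ODE in $\omega$ for $\FT$, with the Gaussian as the homogeneous solution and an inhomogeneity of size $O(\omega^{-2}+\lN^{-2}/N)$. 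A second application of the same transport to the anisotropy expectation $\G(\omega)$ (Lemma \ref{lem:adddecay}) is needed to bound the inhomogeneity; the $a\,priori$ bounds and local laws enter there, not as "remainders along a flow." Solving the ODE then gives the decay. One further inaccuracy: the $\log N$ in \eqref{epNlN} does not come from a $\log N$ loss in the remainders of the decay estimate — that estimate has no $\log$ — but from integrating the floor term $\lN^{-2}/N$ of the decay bound against $|\omega|^{-1}$ over the intermediate frequency window $K\leq|\omega|\leq 1/\dN$, which has logarithmic length.
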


The new technical ingredient in the proof of Theorem \ref{theo}
is a decay estimate on the Fourier transform of fluctuations. The (sequence of) 
test functions $\phi$ having been chosen as above, we introduce the characteristic function $\FT$:
\begin{equation}
\label{eq:FTvarphi}
\FT : \omega \mapsto \FT(\omega) := \Esp \left[ e^{i \omega \langle \phi, \LN \rangle}  \right].
\end{equation}
We write $C(\bphi, \muz, \beta)$ to denote a positive constant that depends only on $\bphi, \muz$ and the value of $\beta$.
\begin{prop}[Decay estimate on the Fourier transform of fluctuations]
\label{prop:FTdecayA}
We have that
\begin{equation}
\label{FTdecayA} |\FT(\omega)| \leq C(\bphi, \muz, \beta)  \left( \frac{1}{\omega^2} + \frac{\lN^{-2}}{N} \right),\quad \mbox{\rm for $|\omega| > 1$.}
\end{equation}
\end{prop}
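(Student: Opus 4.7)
The plan, inspired by the transportation toolbox of \cite{LebSerCLT, BLS}, is to derive a ``loop-equation''-type identity for $\FT$, and to iterate it so as to extract $|\omega|^{-2}$ decay. Let $\psi = \nabla h$ be a smooth, compactly supported gradient field on $\R^2$, chosen (as in the global CLT of \cite{LebSerCLT}) so that $T_t(x) := x + t\psi(x)$ transports $\muz$ onto $\muz + t\phi\muz$ at leading order; $\psi$ inherits the support and scale of $\phi$ (in particular, stays compactly supported away from $\partial\Sigma$). Invariance of $\int e^{-\beta H(\XN) + i\omega\sum_j \phi(x_j)}\,\d\XN$ under the change of variables $\XN\mapsto T_t\XN$ (the Jacobian being absorbed), differentiated at $t=0$, yields
\begin{equation}
\label{prop-ident-A}
0 = \Esp\left[ e^{i\omega\langle\phi,\LN\rangle}\left( \sum_j \mathrm{div}\,\psi(x_j) + i\omega \sum_j \psi(x_j)\cdot\nabla\phi(x_j) - \beta\,\partial_t H(T_t\XN)|_{t=0} \right)\right].
\end{equation}
The standard decomposition of \cite{LebSerCLT} writes $\partial_t H(T_t\XN)|_{t=0}$ as a linear part in $\LN$ plus the bilinear ``anisotropy'' $\Ani_\psi(\XN) := \hal\iint -(\psi(x)-\psi(y))\cdot(x-y)/|x-y|^2\,\d\LN(x)\d\LN(y)$. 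Plugging this in and extracting deterministic contributions, \eqref{prop-ident-A} can be rewritten as
\begin{equation}
\label{prop-ident-B}
\omega\,\FT(\omega) = \Esp\left[\mathcal{E}_N(\XN)\,e^{i\omega\langle\phi,\LN\rangle}\right],
\end{equation}
where $\mathcal{E}_N = a_0 \langle\phi,\LN\rangle + a_1 \Ani_\psi + \langle R_\phi,\LN\rangle$ with explicit constants $a_0, a_1$ depending on $\phi, \muz, \beta$ and a smooth bounded function $R_\phi$ at the scale of $\phi$.

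By Cauchy--Schwarz and the input moment bounds --- variance of linear statistics from the global CLT, $L^2$ bounds on smeared anisotropies from the local laws of \cite{LebSerCLT, BBNY} --- one has $\|\mathcal{E}_N\|_{L^2(\PNbeta)} \le C$ uniformly in $N$. Since $|e^{i\omega\langle\phi,\LN\rangle}|=1$, \eqref{prop-ident-B} already gives $|\FT(\omega)| \le C/|\omega|$ for $|\omega|>1$. To gain another power of $|\omega|^{-1}$, apply the transport identity a \emph{second} time, now with the weighted integrand $\mathcal{E}_N(\XN)\,e^{-\beta H(\XN) + i\omega\sum_j \phi(x_j)}$. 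Because $\mathcal{E}_N$ depends on $\XN$, the integration by parts produces an extra term $\sum_j \psi(x_j)\cdot\nabla_{x_j}\mathcal{E}_N$, which is the source of the $\ell_N^{-2}/N$ correction: it involves second derivatives of $\psi$ (of order $\ell_N^{-2}$ at the mesoscopic scale) acting on pair-statistics whose typical size the local laws of \cite{BBNY, LebSerCLT} control at $1/N$ after diagonal regularization. This yields a second identity
\begin{equation*}
\omega\,\Esp\left[\mathcal{E}_N\,e^{i\omega\langle\phi,\LN\rangle}\right] = \Esp\left[\mathcal{E}'_N\,e^{i\omega\langle\phi,\LN\rangle}\right] + \mathrm{Err}(\omega),
\end{equation*}
with $\|\mathcal{E}'_N\|_{L^1(\PNbeta)} \le C$ and $|\mathrm{Err}(\omega)| \le C|\omega|\,\ell_N^{-2}/N$. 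Combining with \eqref{prop-ident-B} gives $|\omega|^2|\FT(\omega)| \le C + C|\omega|\,\ell_N^{-2}/N$, i.e., $|\FT(\omega)| \le C(|\omega|^{-2} + \ell_N^{-2}/N)$ for $|\omega|>1$.

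\emph{Main obstacle.} The technical heart is controlling, uniformly in $\omega$ and at the mesoscopic scale, the bilinear anisotropy $\Esp[\Ani_\psi\,e^{i\omega\langle\phi,\LN\rangle}]$ and the ``cross'' term $\Esp[\sum_j \psi(x_j)\cdot\nabla_{x_j}\mathcal{E}_N\,e^{i\omega\langle\phi,\LN\rangle}]$ arising in the second iteration. The kernel $-(\psi(x)-\psi(y))\cdot(x-y)/|x-y|^2$ is bounded, but its pairing with $\LN\otimes\LN$ has a diagonal singularity that must be regularized; the smearing/truncation toolbox of \cite{LebSerCLT} together with the smeared-electric-field moment bounds of \cite{BBNY, LebSerCLT} provide the required uniform control, and the scale $\ell_N^{-2}/N$ in the final bound is precisely the smallest error these tools deliver at the mesoscopic scale (hence sets the admissible range of $\epN$ in Theorem \ref{theo}).
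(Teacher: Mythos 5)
Your proposal is a genuine alternative packaging of the same underlying transportation/loop-equation technique, but it departs from the paper's route in two ways that are worth making explicit. First, the paper does not use the ``standard'' CLT transport $T_t=\id+t\psi$ and then divide by $N$; it uses the $\omega$-rescaled map $\Teps(x)=x+\frac{\epsilon}{\omega N}\frac{\nabla\phi(x)}{\mmuz(x)}$, chosen so that the first-order energy change is exactly $\frac{2\pi\epsilon}{\omega}\langle\phi,\LN\rangle$; paired with $e^{i\omega\langle\phi,\LN\rangle}$ this produces the \emph{derivative} $\FT'(\omega)$, giving a first-order linear ODE $(iA+B)\FT(\omega)+\frac{iC}{\omega}\FT'(\omega)=\frac{\beta}{N\omega}\G(\omega)+\Ocross(\lN^{-2}/N)$ whose homogeneous solution is Gaussian. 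You instead Cauchy--Schwarz the $\langle\phi,\LN\rangle$ term, trading the ODE structure for the two-step iteration $\omega\FT\mapsto\omega^2\FT$; this works (and yields the same final bound because the anisotropy source term caps the decay at $\omega^{-2}$) but it hides the mechanism that actually forces Gaussian shape. Second, both routes need a \emph{second} application of transport to the anisotropy-weighted integrand; in the paper this is Lemma~\ref{lem:adddecay}, whose engine is Claim~\ref{claim:changeani} (change of the anisotropy under $\Teps$) and the second estimate of Lemma~\ref{lem:effect}, which in turn quotes the nontrivial \cite[Proposition 4.3]{SerfatyCLT} bound in terms of $\F$ and $\#I_N\log N$. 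Your ``cross term'' $\sum_j\psi(x_j)\cdot\nabla_{x_j}\mathcal E_N$ is precisely this object, and describing it as ``second derivatives of $\psi$ acting on pair statistics controlled at $1/N$'' understates the issue: the honest estimate scales like $\frac{\lN^{-3}}{N^{1/2}}(\EnergiePoints+N\lN^2)$, which after the a priori bounds is of size $\lN^{-1}N^{1/2}$, and it is only its smallness relative to $N$ (i.e.\ $\lN\gg N^{-1/2}$) that saves the argument.

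One imprecision you should fix: $\|\mathcal E_N\|_{L^2(\PNbeta)}$ is not bounded uniformly in $\omega$. With the unscaled transport, the first-order expansion of $i\omega\langle\phi,T_t\bXN-N\muz\rangle$ contributes a fluctuation piece of the form $\frac{i\omega}{N}\llangle\psi\cdot\nabla\phi,\LN\rrangle$, whose $L^2$ norm is $O(\omega\lN^{-2}/N)$ and grows with $\omega$. This does not invalidate the conclusion (it only feeds an extra $O(\lN^{-2}/N)$ into each step, which is absorbed by the target bound), but as stated the claim $\|\mathcal E_N\|_{L^2}\le C$ is wrong, and the bookkeeping of these $\omega$-weighted fluctuation remainders is exactly why the paper's $\frac{\epsilon}{\omega N}$ scaling is convenient: it removes the $\omega$ from these terms before identification of orders in $\epsilon$.
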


 \begin{remark}
   In the related, one-dimensional context of linear statistics for
   unitary
 random matrices, a CLT with fast convergence in total variation norm (that
 in particular implies a 
 local CLT) was proven in \cite{johansson1997random} (see also the
 recent \cite{johansson2020multivariate} for multidimensional extensions).
 In the Hermitian random matrix context,
 \cite{bourgade2016fixed}  study the Fourier transform of fluctuations in order to get
 better estimates on the speed of convergence
 of linear statistics to the limiting Gaussian random variable.
\end{remark}

In Section \ref{sec:proofwithdecay}, we present a proof of Theorem \ref{theo},
assuming the decay estimate \eqref{FTdecayA}. We prove the latter
in Section~\ref{sec:proofdecay}. In the Appendix, we gather existing results around the logarithmic interaction energy, its electric formulation, its behavior under a change of variables, and some useful \textit{a priori} bounds.

\subsection{Notation.} 
\label{subsec-notation}
Throughout the paper, constants 
  $K_i$ are universal, i.e. independent of $\omega$ or $N$ (however, they
may depend on $\muz$ and $\beta$.)
  We use throughout the standard $o(\cdot)$ and $O(\cdot)$
notation. Thus, $b=o_n(a)$ 
means that $b/a\to_{n\to\infty} 0$ and $b=O_n(a)$ means that $b/a$ remains bounded by a universal constant
as $n$ varies. When the parameter $n$ is clear from the context, we write
$a_n=o(b_n)$, etc. The notation $a_N\ll b_N$ is thus equivalent 
to the implicit notation $a=o_N(b)$. Finally, the notation $\Ocross(t)$
means a quantity whose absolute value
is bounded above by $Ct$ where $C$ may depend on other parameters, e.g.
$N,\omega$, but not on $t$.

When no confusion can occur,
  we use the symbols $\P$, $\Esp$ to denote probabilities and expectations
with respect to the measure determined by the random variables involved. Thus, 
the left hand side of \eqref{localCLT} equals
$\P \left[  \langle \phi, \LN \rangle \in (\aa - \epsilon_N, \aa + \epsilon_N) \right]$.

\section{Proof of Theorem \ref{theo} using Proposition \ref{prop:FTdecayA}}
\label{sec:proofwithdecay}
In this section we fix an arbitrary sequence of positive real numbers $\dN$ such that
$$
\frac{\lN^{-2}}{N} \ll  \dN \ll\epN.
$$
We let $\mR$ be a $C^{\infty}$ non-negative function of mass $1$, supported on $(-1,1)$ and we let 
$\Rd := \frac{1}{\dN} \mR\left(\frac{\cdot}{\dN}\right)$. We use the function $\Rd$ to regularize the random variable $\langle \phi, \LN \rangle$ at scale $\dN$, that is we let $\tX$ be the sum of $\langle \phi, \LN \rangle$ and of an independent random variable with density $\Rd$. Since the random variable $\tX$ almost surely differs from the original variable by at most $\dN \ll \epN$, it is enough to prove that for every $\aa$ real, we have:
$$
\P \left[  \tX \in (\aa - \epsilon_N, \aa+ \epsilon_N) \right] = \P\left[  Z \in (\aa - \epsilon_N, \aa + \epsilon_N) \right] (1 + o_N(1)).
$$ 
For simplicity of notation we take in the sequel $\aa=0$. In fact, as can be easily checked, the estimates are uniform for $\aa$
in compacts.

By regularization, $\tX$ has a  density with respect to the Lebesgue measure,
which is bounded by some function of $N$.  We write 
the ``local" probability $\P \left[  \tX \in (- \epsilon_N, \epsilon_N) \right]$ as the integral of the indicator function $\1_{[-\epsilon_N, \epsilon_N]}(t)$ against this density, and we use Parseval's identity:
\begin{equation}
\label{Parseval}
\P \left[  \tX \in (- \epsilon_N, \epsilon_N) \right] = \int_{\R} \frac{\sin(\epN \omega)}{\omega} \widehat{\Rd}(\omega) \FT(\omega) \d \omega = \int_{\R} \frac{\sin(\epN \omega)}{\omega} \widehat{\mR}(\dN \omega) \FT(\omega) \d \omega.
\end{equation}
We decompose the integral over $\R$ in three parts: low frequencies where $|\omega| \leq K$ (where $K$ is an arbitrarily large constant),
intermediate frequencies
where $K \leq |\omega| \leq \frac{1}{\dN}$,
and high frequencies where $|\omega| \geq \frac{1}{\dN}$.

1. \textbf{Low frequencies}. We let 
\begin{equation}
\label{eq-gfour}
\FTi=\Esp(e^{i\omega Z})=e^{-\omega^2 \frac{\sigma_Z^2}{2}-i\omega b_Z}
\end{equation} 
denote the characteristic function 
of $Z$.
The central limit theorem of \cite{LebSerCLT, BBNY} implies that 
$\FT$ converges pointwise to $\FTi$. For any fixed $K$, we thus have by dominated convergence:
$$
\int_{|\omega| \leq K} \frac{\sin(\epN \omega)}{\omega} \widehat{\mR}(\dN \omega) \FT(\omega) \d \omega = (1 + o_N(1)) \int_{|\omega| \leq K} \frac{\sin(\epN \omega)}{\omega} \widehat{\mR}(\dN \omega) \FTi(\omega) \d \omega .
$$
Moreover, since $\FTi$ is integrable, we have:
$$
\left| \int_{|\omega| > K} \frac{\sin(\epN \omega)}{\omega} \widehat{\mR}(\dN \omega) \FTi(\omega) \d \omega \right| = \epN o_K(1),
$$
and Parseval's identity \eqref{Parseval} holds for $Z, \FTi$ as well, so we may write:
$$
\int_{|\omega| \leq K} \frac{\sin(\epN \omega)}{\omega} \widehat{\mR}(\dN \omega) \FT(\omega) \d \omega =  \P\left[  Z \in (- \epsilon_N,  \epsilon_N) \right](1 + o_N(1))  +  \epN o_K(1).
$$

2. \textbf{Intermediate frequencies.} We have, using \eqref{FTdecayA}, the bound $\frac{\sin(\epN \omega)}{\omega}  \leq \min\left(\epN, \frac{1}{\omega}\right)$,  and a direct integration:
$$
\left|\int_{K \leq |\omega| \leq \frac{1}{\dN} } \frac{\sin(\epN \omega)}{\omega} \widehat{\mR}(\dN \omega) \FT(\omega) \d \omega \right| \leq C(\bphi, \muz, \beta) \left(\frac{1}{K} \epN + \frac{\lN^{-2}}{N} \log \frac{1}{\dN} \right).
$$
Since we have assumed that $\lN \gg N^{-1/2}$, that $\frac{\lN^{-2}}{N} \ll \dN \ll \epN$ and that $\frac{\lN^{-2} \log N}{N} \ll \epN \ll 1$, we obtain
$$
\left|\int_{K \leq |\omega| \leq \frac{1}{\dN} } \frac{\sin(\epN \omega)}{\omega} \widehat{\mR}(\dN \omega) \FT(\omega) \d \omega \right| \leq \epN \left(o_K(1) + o_N(1) \right).
$$

3. \textbf{High frequencies.} In steps 1. and 2. we have simply bounded $\widehat{\mR}$ by $1$. For $|\omega| \geq \frac{1}{\dN}$ we use the smoothness of $\mR$ and write that:
$$
 \widehat{\mR}(\dN \omega) \leq \frac{C(\mR)}{\dN^2 \omega^2}.
$$
Using \eqref{FTdecayA} again, we obtain (keeping in mind that $\frac{\lN^{-2}}{N} \ll \dN \ll \epN$):
\begin{multline*}
\left| \int_{|\omega| \geq \frac{1}{\dN} } \frac{\sin(\epN \omega)}{\omega} \widehat{\mR}(\dN \omega) \FT(\omega) \d \omega  \right| \leq C(\mR)
C(\bphi, \muz, \beta)  \int_{\frac{1}{\dN}}^{+\infty} \frac{1}{\omega} \frac{1}{\dN^2 \omega^2} \left( \frac{1}{\omega} + \frac{\lN^{-2}}{N} \right) \d\omega \\
\leq C(\mR)C(\bphi, \muz, \beta) \dN = \epN o_N(1).
\end{multline*}
In conclusion, we see that:
$$
\P \left[  \tX \in (- \epsilon_N, \epsilon_N) \right] = \P\left[  Z \in (- \epsilon_N,  \epsilon_N) \right](1 + o_N(1)) +  \epN \left(o_K(1) + o_N(1) \right).
$$
We recall that $\P\left[  Z \in (\aa - \epsilon_N, \aa + \epsilon_N) \right]$ is bounded above and below by positive constants times $\epN$, uniformly for $\aa$ in every given line segment. Taking $N \to \infty$ followed by
 $K \to \infty$ concludes the proof of Theorem \ref{theo}.

\section{Proof of Proposition \ref{prop:FTdecayA}}
\label{sec:proofdecay}
This section is devoted to the proof of Proposition \ref{prop:FTdecayA}, which goes in two steps. First, we establish a differential equation, 
see \eqref{FTomegaGomega},
for the characteristic function $\FT$. 
The homogeneous part of the differential
equation
  yields Gaussian decay. The non-homogeneous part contains a Fourier-like quantity associated to the so-called anisotropy term (see \eqref{def:Fani}
  and Section \ref{sec:Ani}), which we study next.

For the first step, we need to evaluate derivatives of various quantities,
  with respect to a parameter $\epsilon$ that is independent of $N$. The dependence in $N$ will be irrelevant because we will take $\epsilon\to 0$ with $N$ fixed.
For this reason, we 
  recall that the notation $\Ocross(\epsilon^2)$ 
  denotes a quantity that is bounded by 
  $\epsilon^2$ times a constant \textit{possibly depending on $N,\omega$} 
  (and, of course, on $\bphi, \beta, \muz$). 

In order to quantify the decay of a Fourier transform, one could try to emulate the proof of Riemann-Lebesgue's lemma. The classical integration by parts cannot be easily performed, and we replace it here by an infinitesimal change of variables. The introduction of a change of variables (infinitesimal or not) and the analysis of its effect on the energy is not new in the present context, indeed there is a vast literature around
loop equations, Ward identities, and other transportation techniques used to study fluctuations of Coulomb gases and related systems. See e.g. \cite{Shcherbina_2014,ameur2015random,zabrodin2006large,bekerman2015transport} and the papers cited above.

\subsection{Preliminaries.}
Let $\epsilon$ be a real number, we define the map $\Teps : \R^2 \to \R^2$ as follows:
\begin{equation}
\label{eq:Teps}
\Teps(x) := x + \frac{\epsilon}{\omega N} \frac{\nabla \phi(x)}{\mmuz(x)}. 
\end{equation}
We recall that by assumption the support of $\phi$ is contained inside the support $\Sigma$
of $\muz$, and that the density $\mmuz$ is bounded below by a positive constant $\cmin$ on $\Sigma$. Thus $\Teps(x)$ is well-defined for all $x$ and when $|\epsilon|$ is smaller than $\left(C(\bphi, \muz)\right)^{-1} \omega N$, the map $\Teps$ is a small (in $C^1$ norm) perturbation of the identity, hence a $C^1$-diffeomorphism of $\R^2$. We turn it into a change of variables on $(\R^2)^N$ by setting:
$$
\vTeps(x_1, \dots, x_N) := \left(\Teps(x_1), \dots, \Teps(x_N) \right).
$$
In this proof, we let $\muep$ be the push-forward of $\muz$ by the map $\Teps$. By the change of variable formula we can compute the density $\mmuep$ of $\muep$ as:
\begin{equation}
\label{muepexp}
\mmuep = \mmuz - \div\left(\mmuz \frac{\epsilon}{\omega N} \frac{\nabla \phi}{\mmuz} \right) + \Ocross(\epsilon^2) = \mmuz - \frac{\epsilon}{\omega N} \Delta \varphi + \Ocross(\epsilon^2).
\end{equation}

Let us write $\FT(\omega)$ explicitly as\footnote{Here we first choose $\YN$ to denote our “state of the system” variable, we will then perform a change of variables.}:
\begin{equation*}
\FT(\omega) = \frac{1}{\ZNbeta} \int_{(\R^2)^N} \exp\left(- \beta \F(\YN, \muz) + \vzeta(\YN) +  i \omega \langle \phi, \bYN - N \muz \rangle \right) \d\YN,
\end{equation*}
and perform the change of variables $\YN = \vTeps(\XN)$. We obtain:
\begin{equation}
\label{eq:FTafterchange}
\FT(\omega) = \frac{1}{\ZNbeta} \int_{(\R^2)^N} \exp \left(- \beta \F\left(\vTeps(\XN), \muz \right) + \vzeta(\XN)  + i \omega \langle \phi, \vTeps(\bXN) - N \muz \rangle \right) \left( \prod_{i=1}^N \det \ \D\Teps(x_i) \right) \d\XN.
\end{equation}
The confining potential $\vzeta$ has not felt the effect of the change of variables because by assumption it vanishes on $\Sigma$ and $\vTeps$ is the identity outside $\Sigma$.

We now study the three components of the integrand in \eqref{eq:FTafterchange}:
\begin{equation*}
\label{eq:TFintegrand}
\F\left(\vTeps(\XN), \muz \right), \quad i \omega \langle \phi, \vTeps(\bXN) - N \muz \rangle, \quad \prod_{i=1}^N \det\  \D\Teps(x_i).
\end{equation*}

\subsection{Effect of transport on the energy}
We study the term $\F\left(\vTeps(\XN), \muz \right)$ in \eqref{eq:FTafterchange} and prove the following expansion in $\epsilon$.
\begin{claim}
\label{claim:termAA} With $\A$ as in \eqref{def:A1}, we have:
\begin{equation}
\label{termAA}
\F\left(\vTeps(\XN), \muz \right) =  \F(\XN, \muz) + \frac{\epsilon}{N\omega} \A\left(\XN, \muz, \frac{\nabla \phi}{\mmuz} \right) +  \frac{2 \pi \epsilon}{\omega}  \langle \phi, \LN \rangle 
 + \Ocross(\epsilon^2).
\end{equation}
\end{claim}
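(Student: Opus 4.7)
\textbf{Proof plan for Claim \ref{claim:termAA}.}
I would separate the effect of the transport on the particles from its effect on the background. Let $\muep$ be the push-forward of $\muz$ by $\Teps$, whose density is given by \eqref{muepexp}, and use the decomposition
$$
\vTeps(\bXN) - \muz \;=\; \bigl(\vTeps(\bXN) - \muep\bigr) \;+\; (\muep - \muz).
$$
Substituting into the quadratic form defining $\F(\vTeps(\XN),\muz)$ in \eqref{def:FXN} produces three pieces: the transported self-energy $\F(\vTeps(\XN),\muep)$; a cross term $\iint_{\diagc} -\log|x-y|\, \d(\vTeps(\bXN)-\muep)(x)\, \d(\muep - \muz)(y)$; and the self-interaction $\tfrac{1}{2}\iint_{\diagc} -\log|x-y|\, \d(\muep-\muz)^{\otimes 2}$.

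For the transported self-energy, I would perform the change of variables $x = \Teps(u)$, $y = \Teps(v)$ in the defining double integral, turning it into an integral of $-\log|\Teps(u)-\Teps(v)|$ against $\d(\bXN-\muz)^{\otimes 2}(u,v)$. Since $\Teps = \id + \tfrac{\epsilon}{N\omega}\tfrac{\nabla\varphi}{\mmuz}$ is a near-identity diffeomorphism, expanding $\log|\Teps(u)-\Teps(v)| - \log|u-v|$ to first order in $\epsilon$ produces a symmetric kernel whose integral against $\d(\bXN-\muz)^{\otimes 2}$ is, by definition, $\tfrac{\epsilon}{N\omega}\A(\XN,\muz,\nabla\varphi/\mmuz)$, with remainder $\Ocross(\epsilon^2)$ (by the definition of $\A$ in \eqref{def:A1}).

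For the cross term, \eqref{muepexp} gives $\d(\muep - \muz)(y) = -\tfrac{\epsilon}{N\omega}\Delta\varphi(y)\, \d y + \Ocross(\epsilon^2)$. The inner $y$-integral $\int -\log|x-y|\,\Delta\varphi(y)\, \d y = 2\pi\varphi(x)$ by Green's identity, using that $-\tfrac{1}{2\pi}\log|\cdot|$ is the fundamental solution of $-\Delta$ in $\R^2$ and that $\varphi$ has compact support. The remaining outer integral against $\d(\vTeps(\bXN)-\muep)$ is then, after expanding $\varphi \circ \Teps = \varphi + \Ocross(\epsilon)$ and identifying the transported fluctuation with $\LN$ at leading order, a constant multiple of $\langle\varphi,\LN\rangle$, producing the announced $\tfrac{2\pi\epsilon}{\omega}\langle\varphi,\LN\rangle$ contribution. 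The third (self-interaction) piece is harmless: the density of $\muep-\muz$ is of order $\epsilon$, smooth, and supported in a fixed compact set (the support of $\varphi$), so its logarithmic self-energy is $O(\epsilon^2)$ and absorbs into $\Ocross(\epsilon^2)$.

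The main anticipated obstacle is the anisotropy expansion: the singularity of $-\log|u-v|$ on the diagonal forbids a naive pointwise Taylor expansion of $-\log|\Teps(u)-\Teps(v)|$ inside the double integral. The cleanest route, as in \cite{LebSerCLT, BLS, BBNY}, is via the electric (gradient) reformulation of the logarithmic energy recalled in the Appendix, under which the change of variables becomes a smooth deformation of a regularized $\|\nabla h\|_{L^2}^2$ quantity on an enlarged space; the first-order variation in $\epsilon$ of this $L^2$ norm under pullback by $\vTeps$ then produces the precise form of $\A$ in \eqref{def:A1} and identifies the quadratic remainder as $\Ocross(\epsilon^2)$.
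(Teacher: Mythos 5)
Your proposal follows the same route as the paper's proof: the identical decomposition $\vTeps(\bXN)-N\muz = \bigl(\vTeps(\bXN)-N\muep\bigr) + N(\muep-\muz)$, the same three resulting terms, the Green's identity for the cross term, and the electric/transport lemma (Lemma \ref{lem:effect}, i.e.\ \cite[Proposition 4.3]{SerfatyCLT}) to identify the first-order variation of $\F(\vTeps(\XN),\Teps\#\muz)$ with $\A$ — which is exactly what you correctly anticipate in your last paragraph as the "cleanest route." One small slip: your Green's identity has the wrong sign; it should read $\int_{\R^2} -\log|x-y|\,\Delta f(y)\,\d y = -2\pi f(x)$, which, combined with the minus sign in $\mmuep-\mmuz = -\frac{\epsilon}{N\omega}\Delta\phi + \Ocross(\epsilon^2)$, is what gives the stated $+\frac{2\pi\epsilon}{\omega}\langle\phi,\LN\rangle$.
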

The term $\A$ in \eqref{termAA}, whose definition is recalled in Section \ref{sec:Ani},  is the ``anisotropy'' term introduced in \cite{LebSerCLT} and studied deeply in \cite{SerfatyCLT}.  We will rely on  existing results (quoted in the Appendix) about $\A$ and its behavior under a change of variables. This import of results motivated our assumptions in
subsection \ref{subsec-settings}.

\begin{proof}[Proof of Claim \ref{claim:termAA}]
Writing $\muz =  \muep + \left(\muz - \muep\right)$ in the definition \eqref{def:FXN} of the energy yields:
\begin{equation}
\label{Ftransport1}
\F(\vTeps(\XN), \muz) = \I + \II + \III,
\end{equation}
where the terms $\I, \II, \III$ are obtained by expanding the quadratic form associated to the energy:
\begin{multline*}
\I := \F\left(\vTeps(\XN), \Teps \# \muz\right), \quad \II := \hal \iint - \log |x-y| \left(N \d\muep - N \d\muz \right)^{\otimes 2}(x,y), 
\\ \III :=  \iint - \log |x-y| \left(\vTeps(\XN) - N \d\muep \right)(x) \left(N \d\muep - N \d\muz \right)(y).
\end{multline*}

\begin{itemize}
  \item For $\I$, using \eqref{eq-firstpartA1}
    of Lemma~\ref{lem:effect} with $\mu = \muz$, $\psi = \frac{\nabla \varphi}{\mmuz}$ and $t = \frac{\epsilon}{N \omega}$, we obtain:
\begin{equation}
\label{IdansA}
\I := \F\left(\vTeps(\XN), \Teps \# \muz\right) = \F(\XN, \muz) + \frac{\epsilon}{N\omega}  \A\left(\XN, \muz, \frac{\nabla \phi}{\mmuz} \right)  + \Ocross(\epsilon^2).
\end{equation}
\item For $\II$, identity \eqref{muepexp} shows that it is of second order in $\epsilon$:
\begin{equation}
\label{IIdansA}
\II = \Ocross(\epsilon^2).
\end{equation}
\item For $\III$, using \eqref{muepexp} again we find
\begin{equation*}
\III =  \iint - \log|x-y|  \left( - \frac{\epsilon}{\omega} \Delta \varphi(y) \right) \d y \left( \sum_{i=1}^N \delta_{\Teps(x_i)} - N (\Teps \# \muz)\right)(\dx) + \Ocross(\epsilon^2).
\end{equation*}
The integral over $y$ can be eliminated using the following identity, valid for $f$ in $C^2_c(\R^2)$,
\begin{equation}
\label{identity}
\int_{\R^2} - \log|x-y|  \Delta f(y) \d y = - 2\pi f(x).
\end{equation}
Changing variables by $\Teps$ in the integral over $x$, we conclude that
$$
\III = 2 \pi  \frac{\epsilon}{\omega} \int \phi \circ \Teps(x) \left( \sum_{i=1}^N \delta_{x_i} - N \muz\right)(\dx) = \frac{2 \pi \epsilon}{\omega}  \langle \phi \circ \Teps, \LN \rangle.
$$
Since $\phi$ is differentiable and $\Teps = \id + \Ocross(\epsilon)$ we have $\phi \circ \Teps = \phi + \Ocross(\epsilon)$ and thus
\begin{equation}
\label{IIIdansA}
\III = \frac{2 \pi \epsilon}{\omega}  \langle \phi, \LN \rangle + \Ocross(\epsilon^2).
\end{equation}
\end{itemize}
Combining \eqref{IdansA}, \eqref{IIdansA}, \eqref{IIIdansA} proves the claim.
\end{proof}

\subsection{Effect of transport on the fluctuations}
We study the term $i \omega \langle \phi, \vTeps(\bXN) - N \muz \rangle$ in \eqref{eq:FTafterchange} and prove the following expansion in $\epsilon$:
\begin{claim} We have:
\label{claim:termBB}
\begin{equation}
\label{eq:termBB}
i \omega \langle \phi, \vTeps(\bXN) - N \muz \rangle = i \omega  \langle \phi, \LN \rangle + i \epsilon \int_{\R^2} |\nabla \phi(x)|^2 \d x  + \frac{i \epsilon}{N} \llangle  \frac{|\nabla \phi|^2}{\mmuz}, \LN \rrangle + \Ocross(\epsilon^2)
\end{equation}
\end{claim}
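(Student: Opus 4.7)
\medskip

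The plan is to expand $\phi\circ\Teps$ by Taylor's formula to second order in $\epsilon$, apply the expansion particle by particle, and then split the resulting sum against the empirical measure $\bXN$ into its deterministic and fluctuating parts against $\muz$ and $\LN$ respectively.

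First I would use that $\phi\in C^5$ (in particular $C^2$) and $\Teps(x)=x+\frac{\epsilon}{\omega N}\frac{\nabla\phi(x)}{\mmuz(x)}$, where $\mmuz$ is bounded below by $\cmin$ on $\Sigma$ and $\Teps=\id$ outside $\Sigma$. A Taylor expansion of $\phi$ at $x$ gives, uniformly in $x$,
\begin{equation*}
\phi(\Teps(x)) = \phi(x) + \nabla\phi(x)\cdot\bigl(\Teps(x)-x\bigr) + \Ocross(\epsilon^2) = \phi(x) + \frac{\epsilon}{\omega N}\frac{|\nabla\phi(x)|^2}{\mmuz(x)} + \Ocross(\epsilon^2),
\end{equation*}
where the implicit constant in $\Ocross(\epsilon^2)$ depends on $\|D^2\phi\|_\infty$, $\cmin$, $\omega$ and $N$, but not on $x$.

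Next I would sum this identity over $i=1,\dots,N$ and subtract the unperturbed term $N\int\phi\,\d\muz$ (which is untouched by the transport, acting only on the random points). This yields
\begin{equation*}
\langle \phi,\vTeps(\bXN)-N\muz\rangle = \langle \phi,\LN\rangle + \frac{\epsilon}{\omega N}\int_{\R^2}\frac{|\nabla\phi(x)|^2}{\mmuz(x)}\,\d\bXN(x) + \Ocross(\epsilon^2),
\end{equation*}
after noting that summing $N$ pointwise errors of size $\Ocross(\epsilon^2)$ is still $\Ocross(\epsilon^2)$ under the paper's convention (constants may depend on $N,\omega$).

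Finally, writing $\bXN = \LN + N\muz$ and using $\d\muz = \mmuz(x)\,\d x$ to cancel the denominator, we obtain
\begin{equation*}
\int_{\R^2}\frac{|\nabla\phi|^2}{\mmuz}\,\d\bXN = \llangle \frac{|\nabla\phi|^2}{\mmuz},\LN\rrangle + N\int_{\R^2}|\nabla\phi(x)|^2\,\d x.
\end{equation*}
Multiplying the combined identity by $i\omega$ produces exactly the right-hand side of \eqref{eq:termBB}, the factor $\omega$ disappearing from the two main correction terms and being absorbed into the constant of the $\Ocross(\epsilon^2)$ remainder. There is no real obstacle here: the whole argument is a pointwise Taylor expansion followed by the elementary decomposition $\bXN=\LN+N\muz$; the only mildly delicate point is to make sure the factor $\mmuz^{-1}$ from the transport is cancelled by $\d\muz=\mmuz\,\d x$ to produce the clean integral $\int|\nabla\phi|^2$ that will later match the variance $\sigma_Z^2=\frac{1}{2\pi\beta}\int|\nabla\phi|^2$.
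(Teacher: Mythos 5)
Your proof is correct and follows essentially the same route as the paper: a Taylor expansion of $\phi\circ\Teps$ around $x$, followed by decomposing the resulting sum into an integral against $\muz$ plus a fluctuation term against $\LN$ (your rewriting via $\d\bXN = \d\LN + N\,\d\muz$ is identical in content to the paper's ``compare the discrete sum to an integral''). You also correctly invoke the paper's convention that $\Ocross(\epsilon^2)$ absorbs constants depending on $N$ and $\omega$, which is what justifies summing $N$ pointwise errors.
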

\begin{proof}[Proof of Claim \ref{claim:termBB}]
The following identity holds:
\begin{equation*}
\langle \phi, \vTeps(\bXN) - N \muz \rangle = \langle \phi, \bXN - N \muz \rangle + \sum_{i=1}^N \left( \phi \circ  \Teps(x_i) - \phi(x_i) \right).
\end{equation*}
From the definition \eqref{eq:Teps} of the map $\Teps$ and 
  Taylor's expansion
we obtain that
$$
\phi \circ \Teps(x) = \phi\left(x + \frac{\epsilon}{N \omega} \frac{\nabla \phi(x)}{\mmuz(x)} \right) = \varphi(x) + \frac{\epsilon}{N \omega} \nabla \phi(x) \cdot \frac{\nabla \phi(x)}{\mmuz(x)} + \Ocross(\epsilon^2).
$$
We can thus write $\langle \phi, \vTeps(\bXN) - \muz \rangle$ as
$$
\langle \phi, \vTeps(\bXN) - N \muz \rangle =  \langle \phi, \LN \rangle + \frac{\epsilon}{N \omega} \sum_{i=1}^N \frac{|\nabla \phi(x_i)|^2}{\mmuz(x_i)} + \Ocross(\epsilon^2).
$$
We may compare the discrete sum to an integral against $\muz$ by introducing a new fluctuation term:
$$
\frac{\epsilon}{N \omega} \sum_{i=1}^N  \frac{|\nabla \phi(x_i)|^2}{\mmuz(x_i)} = \frac{\epsilon}{\omega} \int_{\R^2} \frac{|\nabla \phi(x)|^2}{\mmuz(x)} \mmuz(x) \d x + \frac{\epsilon}{N \omega} \llangle \frac{|\nabla \phi|^2}{\mmuz},\LN \rrangle, 
$$
and we finally obtain:
\begin{equation*}
i \omega \langle \phi, \vTeps(\bXN) - N \muz \rangle = i \omega  \langle \phi, \LN \rangle + i \epsilon \int_{\R^2} |\nabla \phi(x)|^2 \d x  + \frac{i \epsilon}{N} \llangle \frac{|\nabla \phi|^2}{\mmuz},\LN \rrangle + \Ocross(\epsilon^2),
 \end{equation*}
which proves the claim.
\end{proof}

\subsection{Computation of the Jacobian}
We study the term $\prod_{i=1}^N \det\ \D\Teps(x_i)$ in \eqref{eq:FTafterchange} and prove the following expansion in $\epsilon$:
\begin{claim} We have:
\label{claim:termCC}
\begin{equation}
\label{eq:termCC}
\prod_{i=1}^N \det\ \D\Teps(x_i) = 1 + \frac{\epsilon}{\omega N} \int_{\R^2} \Delta \varphi(x) \log \mmuz(x) \d x + \frac{\epsilon}{\omega N} \llangle \div \left( \frac{\nabla \varphi}{\mmuz} \right), \LN \rrangle + \Ocross(\epsilon^2).
\end{equation}
\end{claim}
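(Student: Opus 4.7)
The plan is to expand the Jacobian of $\Teps$ to first order in $\epsilon$, convert the product over particles into a sum via logarithms, decompose $\bXN = \LN + N \muz$ to isolate the fluctuation, and apply integration by parts to recover $\log \mmuz$.

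First, from the definition \eqref{eq:Teps} I differentiate to obtain $\D \Teps(x) = \id + \frac{\epsilon}{\omega N} \D\!\left(\nabla\phi/\mmuz\right)(x)$. Since by assumption $\phi$ is compactly supported in the interior of $\Sigma$, where $\mmuz \geq \cmin > 0$ is of class $C^3$, the vector field $\nabla \phi/\mmuz$ is smooth with compact support, so the expansion $\det(\id + A) = 1 + \tr(A) + \Ocross(\|A\|^2)$ yields, uniformly in $x$,
\[
\det \D \Teps(x) = 1 + \frac{\epsilon}{\omega N} \div\!\left(\frac{\nabla \phi}{\mmuz}\right)(x) + \Ocross(\epsilon^2).
\]

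Second, I take $\log$ of each factor using $\log(1+y) = y + \Ocross(y^2)$, sum the $N$ resulting contributions, and re-exponentiate using $e^y = 1 + y + \Ocross(y^2)$. The $N$ errors of size $\Ocross(\epsilon^2)$ combine into one error of size $\Ocross(\epsilon^2)$, since the $\Ocross$ notation of Subsection \ref{subsec-notation} absorbs constants depending on $N$ and $\omega$; and the total exponent $\sum_i \log\det\D\Teps(x_i)$ is itself $\Ocross(\epsilon)$, so the re-exponentiation is legitimate. This produces
\[
\prod_{i=1}^N \det \D \Teps(x_i) = 1 + \frac{\epsilon}{\omega N} \sum_{i=1}^N \div\!\left(\frac{\nabla \phi}{\mmuz}\right)(x_i) + \Ocross(\epsilon^2).
\]

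Third, I split the sum as $\langle \div(\nabla\phi/\mmuz), \bXN\rangle = \llangle \div(\nabla\phi/\mmuz), \LN\rrangle + N \int \div(\nabla\phi/\mmuz)\, \mmuz \d x$. The fluctuation piece yields directly the third term on the right-hand side of \eqref{eq:termCC}. For the deterministic piece, since $\nabla\phi/\mmuz$ is compactly supported inside $\Sigma$ where $\log \mmuz$ is smooth, two integrations by parts give
\[
\int \div\!\left(\frac{\nabla \phi}{\mmuz}\right) \mmuz \d x = -\int \nabla\phi \cdot \nabla \log \mmuz \d x = \int \Delta \phi\, \log \mmuz \d x,
\]
which supplies the remaining term. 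No significant obstacle arises; the argument is a direct change-of-variables computation combined with an elementary integration by parts, and the only point requiring care is the bookkeeping of $\Ocross(\epsilon^2)$ errors, for which the $(N,\omega)$-dependent convention of Subsection \ref{subsec-notation} is precisely what is needed.
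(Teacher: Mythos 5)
Your proof is correct and follows essentially the same route as the paper: expand the Jacobian to first order in $\epsilon$, use the $\exp(\sum\log)$ device to turn the product into a sum, split that sum into a $\muz$-average plus a fluctuation against $\LN$, and re-exponentiate. The only cosmetic difference is in how the deterministic integral is evaluated: you compute $\int \div(\nabla\phi/\mmuz)\,\mmuz\,\d x = \int \Delta\phi\,\log\mmuz\,\d x$ by two integrations by parts, whereas the paper instead uses the pushforward identity $\det \D\Teps(x) = \mmuz(x)/(\mmuep\circ\Teps)(x)$ to write $\int \log\det\D\Teps\,\d\muz = \int\log\mmuz\,\d\muz - \int\log\mmuep\,\d\muep$ and then Taylor expands in $\epsilon$ using \eqref{muepexp}. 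Both computations produce the same term, and your version is if anything slightly more direct.
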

\begin{proof}[Proof of Claim \ref{claim:termCC}]
  The proof follows a computation from
    \cite{LebSerCLT}.
  We rewrite the left-hand side of \eqref{eq:termCC} as $\exp\left(\sum_{i=1}^N \log \det \D\Teps(x_i)\right)$, and compare the summand with an integral against $\muz$ by introducing a new fluctuation term: 
  \begin{equation}
    \label{eq-sta1}
\sum_{i=1}^N \log \det \ \D\Teps(x_i) = N \int_{\R^2} \log \det \ \D\Teps(x) \d\muz(x) + \llangle \log \det \ \D\Teps, \LN \rrangle.
\end{equation}
By the change of variables formula, we have that
$\det \ \D\Teps(x) = \frac{\mmuz(x)}{\mmuep \circ \Teps(x)}$ and therefore
$$
\int_{\R^2} \log \det \  \D\Teps(x) \d \muz(x) = \int_{\R^2} \log \mmuz(x) \d\muz(x) - \int_{\R^2} \log \mmuep (x) \d \muep(x).
$$
Using \eqref{muepexp} and a first order Taylor
expansion in the last display, we obtain that
\begin{equation}
  \label{eq-sta2}
\int_{\R^2} \log \det \  \D\Teps(x) \d\muz(x) = \frac{\epsilon}{N \omega} \int_{\R^2} \Delta \varphi(x) \log \mmuz(x) \dx + \Ocross(\epsilon^2).
\end{equation}
On the other hand, using a Taylor's expansion for $\det$
around the identity matrix we have:
$$
\log \det \ \D\Teps (x) = \frac{\epsilon}{N \omega} \div \left( \frac{\nabla \varphi}{\mmuz} \right)(x) + \Ocross(\epsilon^2).
$$
Combining the last display with \eqref{eq-sta1} and \eqref{eq-sta2}, we obtain
\eqref{eq:termCC} and complete the proof of the claim.
\end{proof}

\subsection{Combining the effects of the transport}
\label{sec:combineeffects}
Using Claims
  \ref{claim:termAA}, \ref{claim:termBB} and \ref{claim:termCC} in \eqref{eq:FTafterchange}, we obtain:
\begin{multline*}
\FT(\omega) = \frac{1}{\ZNbeta} \int \exp\left(- \beta \F(\XN, \muz) + \vzeta(\XN)  +  i\omega \langle \phi, \LN \rangle \right)
\\
 \times \left(1  - \frac{\beta \epsilon}{N\omega} \A\left(\XN, \muz, \frac{\nabla \phi}{\mmuz} \right) -  \frac{2 \pi \beta \epsilon}{\omega}  \langle \phi, \LN \rangle 
 + \Ocross(\epsilon^2)  \right)
 \\
\times \left( 1 + i \epsilon \int_{\R^2} |\nabla \phi(x)|^2 \d x  + \frac{i \epsilon}{N} \llangle  \frac{|\nabla \phi|^2}{\mmuz}, \LN \rrangle + \Ocross(\epsilon^2) \right)
\\
 \times \left(1 + \frac{\epsilon}{\omega} \int_{\R^2} \Delta \varphi(x) \log \mmuz(x) \d x + \frac{\epsilon}{\omega N} \llangle \div \left( \frac{\nabla \varphi}{\mmuz} \right), \LN \rrangle + \Ocross(\epsilon^2) \right)  \d\XN.
\end{multline*}

Since the left side in the last display is independent of
$\epsilon$, identifying the terms of order $1$ in $\epsilon$ gives that:
\begin{multline*}
\frac{1}{\ZNbeta} \int \exp\left(- \beta \F(\XN, \muz) + \vzeta(\XN)  +  i\omega \langle \phi, \LN \rangle \right) \left(  - \frac{\beta}{N\omega} \A\left(\XN, \muz, \frac{\nabla \phi}{\mmuz} \right) -  \frac{2 \pi \beta}{\omega}  \langle \phi, \LN \rangle  \right. \\
+ i \int_{\R^2} |\nabla \phi(x)|^2 \d x + \frac{i}{N} \llangle  \frac{|\nabla \phi|^2}{\mmuz}, \LN \rrangle \\
+\left. \frac{1}{\omega} \int_{\R^2} \Delta \varphi(x) \log \mmuz(x) \d x + \frac{1}{\omega N} \llangle \div \left( \frac{\nabla \varphi}{\mmuz} \right), \LN \rrangle \right) \d\XN = 0,
\end{multline*}
which can be seen as the expectation (under $\PNbeta$) of the quantity $\exp\left( i\omega \langle \phi, \LN \rangle\right)$ multiplied by some terms. This identity is analoguous to e.g. \cite[Prop 7.3]{BBNYA}, but this time with a complex-valued function $f$ (here $i \varphi$). We now make the following observations:

1. The expectation of $-\frac{2\pi\beta}{\omega} \langle \phi, \LN \rangle$ yields a multiple of the derivative of $\FT(\omega)$, indeed:
$$
-\frac{2\pi\beta}{\omega} \Esp\left[ \exp\left( i\omega \langle \phi, \LN \rangle\right) \langle \phi, \LN \rangle \right] = -\frac{2\pi\beta}{\omega} \frac{1}{i} \FT'(\omega) = \frac{2 i \pi\beta}{\omega} \FT'(\omega).
$$
We note in passing that the choice of the transport $T_\epsilon$, see
  \eqref{eq:Teps}, was geared toward obtaining the derivative
  $\FT'(\omega)$ in the last display. The fact that we are working in
  dimension $d=2$ played a crucial role in the design of
  the transport, via the identity \eqref{identity}.

2. The term $i \int_{\R^2} |\nabla \phi(x)|^2 \d x = i \| \phi \|_{H^1}^2 $ is a constant, and so is the term $\frac{1}{\omega} \int_{\R^2} \Delta \varphi(x) \log \mmuz(x) \d x$.

3. For the fluctuation terms, using the fact that
``fluctuations are bounded in $L^1(\P)$'' as expressed in Lemma~\ref{lem:FluctuationsBounded}, we can write:
\begin{equation}
  \label{eq-sat3}
\left| \Esp\left[\exp\left( i\omega \langle \phi, \LN \rangle\right)  \frac{i}{N} \llangle   \frac{|\nabla \phi|^2}{\mmuz}, \LN \rrangle \right] \right| \leq \frac{1}{N} \Esp\left[ \left| \llangle  \frac{|\nabla \phi|^2}{\mmuz}, \LN \rrangle \right| \right] \leq  C(\bphi, \beta, \muz) \frac{1}{N}  \lN^{-2} ,
\end{equation}
where $C(\bphi, \beta, \muz)$ is some constant (depending on the test
function $\bphi$, on $\beta$ and on $\muz$). Indeed, since $\phi$ lives at scale $\lN$, the gradient $\nabla \phi$ is of order $\llN$, the function $\frac{|\nabla \phi|^2}{\mmuz}$ is bounded by a constant (depending on $\mmuz$) times $\llN^{-2}$ and every additional derivative loses a factor $\llN^{-1}$. 
The \textit{a priori} bound of 
Lemma \ref{lem:FluctuationsBounded}
states that the fluctuation of 
$\langle \frac{|\nabla \phi|^2}{\mmuz},\LN\rangle$
is of order $\llN^{-2}$. Similarly, we have:
\begin{equation}
  \label{eq-sat4}
\left| \Esp\left[\exp\left( i\omega \langle \phi, \LN \rangle\right)  \frac{1}{\omega N} \llangle \div \left( \frac{\nabla \varphi}{\mmuz} \right), \LN \rrangle   \right] \right| \leq \frac{1}{\omega N}  \Esp \left[  \left| \llangle \div \left( \frac{\nabla \varphi}{\mmuz} \right), \LN \rrangle   \right| \right] \leq  C(\bphi, \beta, \muz) \frac{1}{\omega N}  \lN^{-2}.
\end{equation}
Since we are interested in the \textit{decay} of $\FT(\omega)$ for 
$|\omega|>1$, the bound \eqref{eq-sat3}
is the dominant one. We note in passing that
in order to apply Lemma \ref{lem:FluctuationsBounded}, we need both $\frac{|\nabla \phi|^2}{\mmuz}$ and $ \div \left( \frac{\nabla \varphi}{\mmuz} \right)$ to be of class $C^3$, which is guaranteed when $\phi$ itself is of class $C^5$ and $\mmuz$ of class $C^3$, hence our regularity assumptions - which we have not tried to optimize.

4. Concerning the term $ - \frac{\beta}{N\omega} \A\left(\XN, \muz, \frac{\nabla \phi}{\mmuz} \right)$, in  \eqref{def:A1} of
Section \ref{sec:Ani} we decompose $\A$ into: 
$$
\A\left(\XN, \muz, \frac{\nabla \phi}{\mmuz}\right) = \Ani\left(\XN, \muz, \frac{\nabla \phi}{\mmuz} \right) + \frac{1}{4} \sum_{i=1}^N \div \left(\frac{\nabla \phi}{\mmuz} \right)(x_i).
$$
The sum in the right-hand side can be compared to a continuous integral, and we obtain, after an integration by parts, that
\begin{equation*}
\sum_{i=1}^N \div \left(\frac{\nabla \phi}{\mmuz} \right)(x_i) = N \int_{\R^2} \Delta \varphi(x) \log \mmuz(x) \d x + \llangle \div \left(\frac{\nabla \phi}{\mmuz}\right),  \LN \rrangle,
\end{equation*}
with the same fluctuation term as the one obtained in Claim \ref{claim:termCC}.
(The first term in the right hand side of the last display 
  contributed to the mean of $Z$ in Theorem \ref{theo}.)
To summarize, we have:
\begin{equation*}
- \frac{\beta}{N\omega} \A\left(\XN, \muz, \frac{\nabla \phi}{\mmuz} \right) = - \frac{\beta}{N\omega} \Ani\left(\XN, \muz, \frac{\nabla \phi}{\mmuz} \right) - \frac{\beta}{4 \omega} \int_{\R^2} \Delta \varphi(x) \log \mmuz(x) \d x  + C(\bphi, \beta, \muz) \frac{1}{\omega N}  \lN^{-2}.
\end{equation*}

We may thus write, in conclusion:
\begin{multline}
\label{FTomegaGomega}
\left( i \| \phi \|_{H^1}^2 + \left(\frac{1}{\omega} - \frac{\beta}{4 \omega}\right) \int_{\R^2} \Delta \varphi(x) \log \mmuz(x) \d x\right) \FT(\omega) + \frac{2i \pi \beta}{\omega} \FT'(\omega) \\
=  \frac{\beta}{N\omega} \G(\omega) + C(\bphi, \beta, \muz) \frac{1}{N}  \lN^{-2},
\end{multline}
where $\G(\omega)$ is defined as $\Esp \left[ e^{i\omega \langle \phi, \LN \rangle} \Ani\left(\XN, \muz, \frac{\nabla \phi}{\mmuz} \right) \right]$, i.e.\footnote{Here we simply switch back to $\YN$ as the variable in the integral.}: 
\begin{equation}
\label{def:Fani}
\G(\omega) := \frac{1}{\ZNbeta} \int_{(\R^2)^N} \exp\left(- \beta \F(\YN, \muz) + \vzeta(\YN)  + i \omega \langle \phi, \bYN - N \muz \rangle \right)  \Ani\left(\YN, \muz, \frac{\nabla \phi}{\mmuz} \right) \d\YN.
\end{equation}

\begin{remark}
If the right-hand side of \eqref{FTomegaGomega} were zero, we would obtain a simple ODE for $\FT(\omega)$ of the form
\begin{equation*}
\FT'(\omega) = \left(- \frac{\omega \| \phi \|_{H^1}^2 }{2 \pi \beta} + \frac{i}{2\pi} \left( \frac{1}{\beta} - \frac{1}{4} \right) \int_{\R^2} \Delta \varphi(x) \log \mmuz(x) \d x \right) \FT(\omega),
\end{equation*}
whose solution (since $\FT(0) = 1$) would indeed coincide with the Fourier transform of the limiting distribution for fluctuations of linear statistics as obtained in \cite{LebSerCLT, BBNY}.
\end{remark}

Using an \textit{a priori} bound on $\Ani$, we can see that $|\G(\omega)|$ is of order $N$. This is not enough for our purposes, which is why we need to go further and study the decay of $\G(\omega)$ itself.
This is established in the next section.
\subsection{Additional decay estimate}
\begin{lemma} There exists a constant $C(\bphi,\muz,\beta)$ so that, 
  for $|\omega| > 1$,
\label{lem:adddecay}
\begin{equation}
\label{decayG}
\left| \G(\omega) \right|  \leq C(\bphi, \muz, \beta) \left( \frac{N}{\omega} + \lN^{-2} \right).
\end{equation}
\end{lemma}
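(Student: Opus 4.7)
The plan is to repeat, starting from the integral \eqref{def:Fani} defining $\G(\omega)$, the transport argument developed in Sections 3.1--3.5 for $\FT(\omega)$, thereby producing an ODE for $\G$ whose inversion yields the desired bound \eqref{decayG}.

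Concretely, I would perform the same change of variables $\YN = \vTeps(\XN)$ in \eqref{def:Fani} with $\Teps$ as in \eqref{eq:Teps}. The exponential weight and the Jacobian transform exactly as in Claims \ref{claim:termAA}, \ref{claim:termBB}, and \ref{claim:termCC}; the only new object to analyze is the factor $\Ani\bigl(\vTeps(\XN),\muz,\nabla\phi/\mmuz\bigr)$. Using an analogue of Lemma \ref{lem:effect} for the anisotropy (drawing on the study of $\Ani$ and $\A$ under transport carried out in \cite{SerfatyCLT}), I would expand
\[
\Ani\bigl(\vTeps(\XN),\muz,\nabla\phi/\mmuz\bigr) = \Ani\bigl(\XN,\muz,\nabla\phi/\mmuz\bigr) + \frac{\epsilon}{N\omega}\,\mathsf{R}(\XN) + \Ocross(\epsilon^2),
\]
where $\mathsf{R}$ is a second-order anisotropy-type functional sharing the same type of a priori bound as $\Ani$. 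Since the left-hand side of \eqref{def:Fani} does not depend on $\epsilon$, identifying the $O(\epsilon)$ coefficient in the transported identity produces, by the same bookkeeping as in Section \ref{sec:combineeffects}, a relation of the form
\[
\bigl(i\|\phi\|_{H^1}^2 + O(1/\omega)\bigr)\G(\omega) + \frac{2i\pi\beta}{\omega}\G'(\omega) = \mathsf{S}(\omega),
\]
where $\mathsf{S}(\omega)$ gathers the expectations of $e^{i\omega\langle\phi,\LN\rangle}$ against products of $\Ani$ with each fluctuation and energy term arising from Claims \ref{claim:termAA}--\ref{claim:termCC}, plus the new source $\frac{\beta}{N\omega}\Esp\bigl[e^{i\omega\langle\phi,\LN\rangle}\mathsf{R}\bigr]$.

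To conclude, I would show that $|\mathsf{S}(\omega)| \leq C(\bphi,\muz,\beta)\bigl(N + \omega\lN^{-2}\bigr)$ for $|\omega|>1$, whereupon a standard ODE inversion (inheriting Gaussian-type decay from the homogeneous part, exactly as for $\FT$) yields \eqref{decayG}. Pieces in which $\Ani$ multiplies a fluctuation $\frac{1}{N}\llangle\cdot,\LN\rrangle$ can be controlled by Cauchy--Schwarz combined with Lemma \ref{lem:FluctuationsBounded} and a pointwise a priori bound $|\Ani|\lesssim N$ (up to logarithmic factors). The hard part will be the cross term $\Esp\bigl[e^{i\omega\langle\phi,\LN\rangle}\A\cdot\Ani\bigr]$ coming from the expansion of the energy in Claim \ref{claim:termAA}: a naive bound $|\A||\Ani|\lesssim N^2$ is one factor of $N$ too large for the target. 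I plan to address it by re-splitting $\A = \Ani + \frac{1}{4}\sum\mathrm{div}(\nabla\phi/\mmuz)$ exactly as in Section \ref{sec:combineeffects}, so that the diagonal piece $\Ani^2$ can be treated by a variance-type estimate of "CLT-variance" order $\lN^{-2}$ derivable from \cite{SerfatyCLT}, while the remainder reduces to $\Ani$ multiplied by a deterministic integral of order $N$, contributing the expected $N/\omega$ to $\mathsf{S}(\omega)$.
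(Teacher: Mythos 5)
Your overall transport scheme matches the paper's: change variables by $\vTeps$ in the definition \eqref{def:Fani} of $\G$, reuse Claims~\ref{claim:termAA}--\ref{claim:termCC} for the weight and Jacobian, and add a new claim (the paper's Claim~\ref{claim:changeani}) estimating how $\Ani$ itself moves under $\vTeps$. However, the final step diverges from the paper's proof in a way that matters. The paper does \emph{not} form and invert an ODE for $\G$: in the $O(\epsilon)$ identity, the coefficient of $\G(\omega)$ is the nonzero constant $i\|\phi\|_{H^1}^2$ (up to an $O(1/\omega)$ correction), and \emph{every} other term—including the one you want to read as $\frac{2i\pi\beta}{\omega}\G'(\omega)$, namely $-\frac{2\pi\beta}{\omega}\Esp\bigl[e^{i\omega\langle\phi,\LN\rangle}\Ani\,\langle\phi,\LN\rangle\bigr]$—is just bounded crudely in absolute value by Cauchy--Schwarz and the a priori bounds of Lemma~\ref{lem:aprioribounds} and Lemma~\ref{lem:FluctuationsBounded}, giving $O(N/\omega)+O(\lN^{-2})$. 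One then reads off $|\G(\omega)|$ by dividing. This is strictly simpler: no initial condition $\G(0)=\Esp[\Ani]$ (which is $O(N)$) to track, no Duhamel formula.

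The gap in your version is in the bookkeeping around the ``hard part.'' The cross term from Claim~\ref{claim:termAA} appears with coefficient $-\frac{\beta}{N\omega}$, not $-\frac{\beta}{\omega}$. With the $1/N$ included, the naive $L^2$ bound $\Esp[|\Ani||\A|]\leq \|\Ani\|_{L^2}\|\A\|_{L^2}\leq C N^2$ yields $\frac{\beta}{N\omega}\cdot CN^2 = CN/\omega$, which is exactly what is needed; there is no missing factor of $N$. Consequently, the proposed ``variance-type estimate of CLT-variance order $\lN^{-2}$'' for $\Ani^2$ is neither available (Lemma~\ref{lem:aprioribounds} only gives $\|\Ani\|_{L^2}\lesssim N$, not $\lesssim\lN^{-1}$, and no $O(\lN^{-2})$ bound on $\mathrm{Var}(\Ani)$ is proved in \cite{SerfatyCLT}) nor needed. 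Finally, your stated target $|\mathsf{S}(\omega)|\leq C(N+\omega\lN^{-2})$ is off: the correct bound is $C(N/\omega+\lN^{-2})$, and this distinction is not cosmetic. If you fed $|\mathsf{S}(\omega')|\lesssim N$ into the Duhamel formula for your ODE, the inhomogeneous contribution $e^{-a\omega^2/2}\int_0^\omega \omega'\,\mathsf{S}(\omega')\,e^{a\omega'^2/2}\,\d\omega'$ would saturate at order $N$, not $N/\omega$, giving a bound on $\G$ that is too weak for large $\omega$ to then close the argument for $\FT$. Once you have the correct $|\mathsf{S}(\omega)|\leq C(N/\omega+\lN^{-2})$, the ODE inversion does give the right answer, but at that point you may as well skip the ODE entirely, as the paper does.

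One minor point: you posit an exact first-order expansion $\Ani(\vTeps(\XN),\cdot)=\Ani(\XN,\cdot)+\frac{\epsilon}{N\omega}\mathsf{R}(\XN)+\Ocross(\epsilon^2)$ with $\mathsf{R}$ a well-defined functional. The paper (Claim~\ref{claim:changeani}) proves only a one-sided \emph{bound} on the increment, obtained by comparing $\Ani(\YN,\muz,\cdot)$ first to $\Ani(\YN,\muep,\cdot)$ via electric-field estimates and then to $\Ani(\XN,\muz,\cdot)$ via \eqref{eq-sat6}. An exact $\mathsf{R}$ is not needed and would require extra work to construct; a bound suffices for the argument.
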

\begin{proof}[Proof of Lemma \ref{lem:adddecay}]
We use the same strategy as above, and perform the same change of variables, namely $\YN = \vTeps(\XN)$ in the right-hand side of \eqref{def:Fani}. We get:
\begin{multline}
\label{gomegachange}
\G(\omega) = \frac{1}{\ZNbeta} \int_{(\R^2)^N} \exp \left(- \beta \F\left(\vTeps(\XN), \muz \right) + \vzeta(\XN)  + i \omega \langle \phi, \vTeps(\bXN) - N \muz \rangle \right) \\
\times \Ani\left(\vTeps(\XN), \muz, \frac{\nabla \phi}{\mmuz} \right)  \left( \prod_{i=1}^N \det \ \D\Teps(x_i)\right) \d\XN.
\end{multline}
The effect of this change of variables has been previously computed for all the terms except for the anisotropy itself. 

\begin{claim}[Effect of the change of variables on the anisotropy]
\label{claim:changeani}
When $\YN = \vTeps(\XN)$, we have:
\begin{equation}
\label{changeani}
\left|\Ani\left(\YN, \muz, \frac{\nabla \phi}{\mmuz}\right) - \Ani\left(\XN, \muz, \frac{\nabla \phi}{\mmuz}\right)\right|  \leq 
C(\bphi, \beta, \muz)  \frac{\epsilon}{\omega} \frac{\lN^{-3}}{N^{1/2}} \left( \EnergiePoints + N \lN^2 \right) + \Ocross(\epsilon^2).
\end{equation}
\end{claim}
The quantity $\EnergiePoints$ will be introduced in 
Section \ref{sec:enerpoints} as the sum of the electric field
  $\Ele$ and the number of points $\Points$ on the support of
$\phi$, but
   its precise definition is irrelevant for the moment. It suffices to keep in mind that $\EnergiePoints$ is of order $N \llN^2$ in a sense made precise below.  We postpone the proof of Claim \ref{claim:changeani} to Section \ref{sec:proofchangeani}. We then argue as in Section \ref{sec:combineeffects}. We obtain: 
\begin{multline*}
\G(\omega) =
\frac{1}{\ZNbeta} \int \exp\left(- \beta \F(\XN, \muz) + \vzeta(\XN)  + i\omega \langle \phi, \LN \rangle \right)
\\
 \times \left(1  - \frac{\beta \epsilon}{N\omega} \A\left(\XN, \muz, \frac{\nabla \phi}{\mmuz} \right) -  \frac{2 \pi \beta \epsilon}{\omega}  \langle \phi, \LN \rangle 
 + \Ocross(\epsilon^2)  \right)
 \\
\times \left( 1 + i \epsilon \int_{\R^2} |\nabla \phi(x)|^2 \d x  + \frac{i \epsilon}{N} \llangle  \frac{|\nabla \phi|^2}{\mmuz}, \LN \rrangle + \Ocross(\epsilon^2) \right)
\\
 \times \left(1 + \frac{\epsilon}{\omega} \int_{\R^2} \Delta \varphi(x) \log \mmuz(x) \d x + \frac{\epsilon}{\omega N} \llangle \div \left( \frac{\nabla \varphi}{\mmuz} \right), \LN \rrangle + \Ocross(\epsilon^2) \right) \\
\times 
\left( \Ani\left(\XN, \muz, \frac{\nabla \phi}{\mmuz}\right) + C(\bphi, \beta, \muz)  \frac{\epsilon}{\omega} \frac{\lN^{-3}}{N^{1/2}} \left( \EnergiePoints + N \lN^2 \right) + \Ocross(\epsilon^2) \right)
  \d\XN.
\end{multline*}
Identifying again the terms of order $1$ in $\epsilon$, we find that
\begin{multline*}
\frac{1}{\ZNbeta} \int \exp\left(- \beta \F(\XN, \muz) + \vzeta(\XN)  + i\omega \langle \phi, \LN \rangle \right) 
 \Bigg[ \Ani\left(\XN, \muz, \frac{\nabla \phi}{\mmuz}\right) \\
   \times \Big[ - \frac{\beta}{N\omega} \A\left(\XN, \muz, \frac{\nabla \phi}{\mmuz} \right) -  \frac{2 \pi \beta}{\omega}  \langle \phi, \LN \rangle \\
+ i \int_{\R^2} |\nabla \phi(x)|^2 \d x  + \frac{i}{N} \llangle  \frac{|\nabla \phi|^2}{\mmuz}, \LN \rrangle \\
+ \frac{1}{\omega} \int_{\R^2} \Delta \varphi(x) \log \mmuz(x) \d x + \frac{1}{\omega N} \llangle \div \left( \frac{\nabla \varphi}{\mmuz} \right), \LN \rrangle \Big] \\
+ C(\bphi, \beta, \muz)  \frac{1}{\omega} \frac{\lN^{-3}}{N^{1/2}} \left( \EnergiePoints + N \lN^2 \right) \Bigg] \d\XN = 0.
\end{multline*}
We use the following a priori bounds, valid in $L^2(\PNbeta)$ (see Section \ref{sec:aprioribounds}):
\begin{eqnarray*}
\left| \Ani\left(\XN, \muz, \frac{\nabla \phi}{\mmuz} \right) \right| & \leq & C(\bphi, \muz, \beta) N 
\quad (\mbox{\rm Lemma \ref{lem:aprioribounds}})\\
\left| \langle \phi, \LN \rangle \right| & \leq  & C(\bphi, \muz, \beta)
\quad (\mbox{\rm Lemma \ref{lem:FluctuationsBounded}})\\
\left| \llangle  \frac{|\nabla \phi|^2}{\mmuz}, \LN \rrangle \right|  & \leq & C(\bphi, \muz, \beta) \lN^{-2}\quad (\mbox{\rm Lemma \ref{lem:FluctuationsBounded}})\\
\left| \llangle \div \left( \frac{\nabla \varphi}{\mmuz} \right), \LN \rrangle \right|   & \leq  & C(\bphi, \muz, \beta) \lN^{-2}\quad (\mbox{\rm Lemma \ref{lem:FluctuationsBounded}})\\
\EnergiePoints & \leq & C(\muz, \beta) N \lN^2 \quad (\mbox{\rm Lemma \ref{lem:aprioribounds}})
\end{eqnarray*}
and, keeping only the dominant terms, we obtain that:
\begin{equation*}
\left| \G(\omega) \right|  \leq C(\bphi, \muz, \beta) \left( \frac{N}{\omega} + \lN^{-2} \right),
\end{equation*}
which concludes the proof of Lemma \ref{lem:adddecay}.
\end{proof}

Substituting the estimate from
  Lemma \ref{lem:adddecay} into \eqref{FTomegaGomega}, 
we obtain a differential equation for $\FT$ of the form:
\begin{equation*}
(iA + B) \FT(\omega) + \frac{iC}{\omega} \FT'(\omega) = \frac{D_1}{\omega^2} + \frac{D_2 \lN^{-2}}{N},
\end{equation*}
where $A, C$ are positive constants (depending on $\beta, \bphi$), $D_1, D_2$ are bounded (depending on $\beta, \bphi, \muz$) and $B$ is real
(depending on $\beta,\bphi,\muz$). Solving this equation yields:
\begin{equation*}
|\FT(\omega)| \leq C(\bphi, \muz, \beta)  \left( \frac{1}{\omega^2} + \frac{\lN^{-2}}{N} \right),
\end{equation*}
which concludes the proof of Proposition \ref{prop:FTdecayA}.

\appendix
\section{Electric fields and anisotropy}
Let $\mu$ be a probability measure with a bounded density and $\XN$ a $N$-tuple of points in $\R^2$.

In this section, 
we denote by $|f|_{\kk}$ the norm of the $\kk$-th derivative of $f$ 
(with $|f|_0$ denoting the supremum norm of $f$).
We recall that $K$ denotes a universal constant, 
dependent only on $\muz,\beta$ and in particular independent of 
$N,\omega,\psi$, whose value may change from line to line. We also recall the 
definition of $\Ocross(t)$, see subsection \ref{subsec-notation}.

\subsection{The electric field formalism}
\label{sec:electricfield}
We define the \textit{electric field} $\nhmu$ associated with $\mu$ and $\XN$ as
$$
\nhmu (z) =  \left( \sum_{i=1}^N - \nabla \log |z-x_i| - N \int_{\R^2} - \nabla \log |z -x| \d \mu(x) \right).
$$
This vector field is in $\Lploc$ for every $p < 2$ but fails to be in $L^2$ because of its singularity around each point charge.  Let $\veta = (\eta_1, \dots, \eta_N)$ be a vector of positive numbers, we define the \textit{truncated electric field} $\nhmue$ as: 
\begin{equation}
\label{def:Eleta}
\nhmue(z) := \nhmu(z) - \sum_{i=1}^N \nabla \fetai(z-x_i),
\end{equation}
where $\fetai(z) := \left(\log \eta_i - \log |z| \right)_+$. We 
refer to e.g. \cite[Section 2.2]{LebSerCLT} and the references therein for a more detailed discussion of this procedure. It has the effect of canceling the contribution to the electric field coming from the particle $x_i$ in the disk of radius $\eta_i$ around $x_i$, thus effectively truncating the singularity around each charge.

A particularly convenient choice of the truncation parameter is the nearest-neighbor distances. For any $i = 1, \dots, N$ we let $\r_i$ be the following quantity:
\begin{equation}
\label{def:vri}
\r_i := \frac{1}{4} \min \left( \frac{1}{\sqrt{N}}, \min_{j \neq i} |x_i - x_j| \right),
\end{equation}
and we let $\vr := (\r_1, \dots, \r_N)$. 
 
\subsection{Electric energy (and number of points)}
\label{sec:enerpoints}
For $\Omega \subset \R^2$,\footnote{We note that in the application in Section \ref{sec:proofdecay} we always
take $\Omega$ to be the support of $\phi$} we introduce the notation $\Ele(\Omega)$ to denote the ``electric field energy'' in $\Omega$
\begin{equation}
\label{def:Ele}
\Ele(\Omega) := \int_{\Omega} |\El_{\vr}|^2.
\end{equation}

We also let $\Points$ be the number of points of the configuration $\XN$ in (an
$N^{-1/2}$-neighborhood of) $\Omega$.

When working with a test function $\varphi$ compactly supported in a domain of characteristic size $\lN$, we  write $\EnergiePoints$ for the sum of the electric energy and the number of points on the support of $\varphi$. This quantity is typically of order $N \lN^2$, see Lemma \ref{lem:aprioribounds}.

\subsection{The anisotropy}
\label{sec:Ani}
The
``anisotropy''
term was introduced in \cite{LebSerCLT} as the first order effect of a transport map (applied both to the particles and the background measure) to the interaction energy. We will also quote results from \cite[Section 4]{SerfatyCLT}), where this quantity is studied extensively. 

When $\mu$ has a bounded density and $\psi$ is a $C^1$ vector field, we define $\Ani(\XN, \mu, \psi)$ as:
\begin{equation}
\label{def:Ani}
\Ani(\XN, \mu, \psi) := \hal \lim_{\eta \to 0^+} \frac{1}{2\pi} \int_{\R^2} \llangle \nhmue , \left(2 \D \psi - (\div \psi) \id \right) \nhmue \rrangle,
\end{equation}
where $\nhmue$ is the truncated electric field defined above, with truncation $\veta := (\eta, \dots, \eta)$.

Note that in \cite{SerfatyCLT} however, the ``anisotropy''
is rather defined as:
\begin{equation}
\label{def:A1}
\A(\XN, \mu, \psi) := \Ani(\XN, \mu, \psi) + \frac{1}{4} \sum_{i=1}^N \div \psi(x_i).
\end{equation}
One advantage is that $\A$ admits an alternative
non-asymptotic expression 
(no need to take a limit $\eta \to 0$ as in \eqref{def:Ani}), 
see \cite[(4.11)]{SerfatyCLT}.
Generally, $\A$ is
the natural quantity to consider in several questions. However, for our purposes, we sometimes prefer to consider the two terms in the right-hand side of \eqref{def:A1} separately.

\subsection{Effect of a transport map}
The following results are direct consequences of \cite[Proposition 4.3]{SerfatyCLT}, upon noting that $\Xi(t)$ of the latter satisfies
  that $\F\left( (\id + t \psi) (\XN), (\id + t \psi) \# \mu\right)-\Xi(t)$
  is independent of $t$, substituting
  \cite[(4.8)]{SerfatyCLT} into \cite[(4.9),(4.10)]{SerfatyCLT}, and using
  \cite[(4.11)]{SerfatyCLT}.
\begin{lemma}[Effect of a transport map]
\label{lem:effect}
Let $\psi$ be in $C^2(\R^2, \R^2)$ with compact support. We have, as $t \to 0$:
\begin{equation}
  \label{eq-firstpartA1}
\F\left( (\id + t \psi) (\XN), (\id + t \psi) \# \mu\right) - \F\left(\XN, \mu \right) = t \A(\XN, \mu, \psi) + \Ocross(t^2).
\end{equation}
Further,
\begin{multline}
  \label{eq-secondpartA1}
  \left|\A\left( (\id + t \psi)(\XN),  (\id + t \psi) \# \mu, \psi \right) 
  -  \A\left( \XN,\mu, \psi \right)\right|  \\
  \leq K  t \left( |\psi|_{\1}^2 + |\psi|_{\0} |\psi|_{\2} + |\psi|_{\1} |\psi|_{\2} N^{-1/2} \right) \left| \F\left(\XN, \mu \right) + \hal \# I_N \log N \right|+\Ocross(t^2),
\nonumber
\end{multline}
where $ \# I_N$ denotes the number of points of $\XN$ in (a $N^{-1/2}$-neighborhood of) the support of $\psi$.

We can easily translate both results in terms of $\Ani$ (instead of $\A$) by writing
\begin{equation}
  \label{eq-sat5}
\F\left( (\id + t \psi) (\XN), (\id + t \psi) \# \mu\right) - \F\left(\XN, \mu \right) = t \Ani(\XN, \mu, \psi) + t \sum_{i=1}^N \div \psi(x_i) +  \Ocross(t^2).
\end{equation}
and
\begin{multline}
  \label{eq-sat6}
\left|\Ani\left( (\id + t \psi)(\XN), \psi,  (\id + t \psi) \# \mu \right) -  \Ani\left( \XN, \psi, \mu \right)\right| \\
\corO{\leq K}  t \left( \left( |\psi|_{\1}^2 + |\psi|_{\0} |\psi|_{\2} + |\psi|_{\1} |\psi|_{\2} N^{-1/2} \right) \left| \F\left(\XN, \mu \right) + \hal \# I_N \log N \right)+ \# I_N |\psi|_{\2} |\psi|_{\0}\right| .
\end{multline}
\end{lemma}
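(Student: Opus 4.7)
The plan is to work with the electric field formulation of the Coulomb energy recalled in Section \ref{sec:electricfield}, exploiting the fact that both $\F$ and $\A$ admit integral expressions in terms of the truncated electric field $\nhmue$. Since the statement is presented as a consequence of \cite[Proposition 4.3]{SerfatyCLT}, my strategy is to re-derive the two expansions directly in the present notation, which keeps the argument self-contained and makes the translation to $\Ani$ transparent.

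To prove \eqref{eq-firstpartA1}, I would rewrite $\F((\id + t \psi)(\XN), (\id + t \psi)\#\mu)$ using the renormalized electric energy and then pull back through the transport $\id + t \psi$. The pull-back expresses the electric field of the transported system as $\nhmue$ composed with the transport, with an extra factor coming from $\D(\id + t \psi)^{-1}$ and a Jacobian $\det \D(\id + t \psi) = 1 + t \div \psi + \Ocross(t^2)$ in the volume element. Expanding the resulting integrand to first order in $t$ yields precisely the quadratic form $\llangle \nhmue, (2 \D \psi - (\div \psi)\id) \nhmue \rrangle$ appearing in \eqref{def:Ani}. Passing to the $\eta \to 0^+$ limit, the bulk contribution is $\Ani(\XN, \mu, \psi)$; the contributions from the truncation disks around each $x_i$ produce the correction $\frac{1}{4} \sum_i \div \psi(x_i)$, and together with \eqref{def:A1} these assemble into $t \A(\XN, \mu, \psi)$.

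For the stability estimate on $\A$, I would start from the non-asymptotic formula \cite[(4.11)]{SerfatyCLT} representing $\A(\XN, \mu, \psi)$ as a finite integral quadratic in the truncated electric field. The difference between $\A$ evaluated at $\XN$ and at $(\id + t \psi)(\XN)$, with correspondingly transported background, can then be controlled by Taylor-expanding in $t$: the pointwise variation of the electric field under transport is governed by $|\psi|_{\1}$ in the bulk, while the variation of the quadratic kernel $2 \D \psi - (\div \psi)\id$ is governed by $|\psi|_{\2}$. A Cauchy--Schwarz estimate then gives rise to the three terms $|\psi|_{\1}^2 + |\psi|_{\0}|\psi|_{\2} + |\psi|_{\1}|\psi|_{\2} N^{-1/2}$, the last being an overlap correction between distinct truncation disks of radius $\sim N^{-1/2}$ (see \eqref{def:vri}). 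The prefactor $|\F(\XN, \mu) + \hal \# I_N \log N|$ appears because the renormalized energy plus logarithmic self-contributions controls the $L^2$ norm of $\nhmue$ restricted to the support of $\psi$, up to constants.

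The translations \eqref{eq-sat5} and \eqref{eq-sat6} follow immediately from the identity \eqref{def:A1}, writing $\A = \Ani + \frac{1}{4} \sum_i \div \psi(x_i)$ and noting that the extra sum is smooth in the point locations, hence affects only the explicit first-order term and the error. The main obstacle will be the stability bound on $\A$: careful bookkeeping is needed to split near-field contributions (inside the truncation disks of radius $\r_i$) from far-field ones, and the specific $N^{-1/2}$ dependence in the error term is where the nearest-neighbor choice in \eqref{def:vri} becomes essential, since it is that choice which ensures the disks are disjoint and the logarithmic subtraction in the energy remains controllable.
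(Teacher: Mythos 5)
The paper itself gives no self-contained proof of this lemma: it simply derives the four identities from \cite[Proposition 4.3]{SerfatyCLT} by noting that $\Xi(t)$ there differs from $\F((\id+t\psi)(\XN),(\id+t\psi)\#\mu)$ by a $t$-independent constant, combining \cite[(4.8)--(4.11)]{SerfatyCLT}. Your proposal takes a genuinely different route by attempting to reproduce the underlying derivation directly in the electric-field formalism. The plan is consistent with how those results are actually obtained in \cite{LebSerCLT, SerfatyCLT}, and your identification of the anisotropy kernel $2\D\psi-(\div\psi)\id$ via the first-order expansion of $(\D\phi_t)^{-1}(\D\phi_t)^{-T}\det\D\phi_t$ is correct, as is your explanation of the role of $|\F+\hal \# I_N\log N|$ as a proxy for the local electric energy. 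What citation buys the paper is that all subtleties (renormalization near the points, the precise origin of the $\frac14\sum_i\div\psi(x_i)$ correction, the $N^{-1/2}$ overlap error) are outsourced; your route would have to reproduce a fair portion of \cite[Section 4]{SerfatyCLT}, which you acknowledge only as a "bookkeeping" obstacle.

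Two points in your sketch need tightening if you were to actually carry it out. First, the phrase "the electric field of the transported system as $\nhmue$ composed with the transport" is not literally true: $\nabla h^{\phi_t\#\mu}[\phi_t(\XN)]\circ\phi_t$ is \emph{not} $(\D\phi_t)^{-T}\nabla h^{\mu}[\XN]$ because the Laplacian is not invariant under a generic diffeomorphism; the difference is precisely what the anisotropy quantifies. The correct starting point is to change variables in the energy functional (either in the electric integral $\int|\nabla h_t|^2$ with $g_t=h_t\circ\phi_t$, \emph{keeping} $g_t\neq h_0$, or directly in the double integral of $-\log|\phi_t(x)-\phi_t(y)|$), and then show that the first-order term factors through the original field up to controlled corrections. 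Second, for \eqref{eq-sat6} you write that the extra sum $\frac14\sum_i\div\psi(x_i)$ "affects only the explicit first-order term and the error"; in fact its variation under the transport, namely $\frac14\sum_i\big(\div\psi(\phi_t(x_i))-\div\psi(x_i)\big) = \Ocross(t\,\#I_N\,|\psi|_{\2}|\psi|_{\0})$, is precisely what produces the additional $\#I_N|\psi|_{\2}|\psi|_{\0}$ term on the right side of \eqref{eq-sat6}. That contribution is not absorbed into either the leading term or the generic $\Ocross(t^2)$ error; it is a genuinely new term in the bound and must be tracked explicitly.
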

\corO{(When applying \cite{SerfatyCLT}, one may wonder whether one needs
to consider $I_N(\XN)$ or $I_N((\id + t \psi)(\XN))$. However, since 
we only care about the asymptotics as $t\to 0$, 
and since all points $x_i$ are translated 
at most by
$t|\psi(x)|$ and are only translated within the support of $\psi$, 
there is no difference which of the two we consider.)}
In our application, $|\psi|_{\0}$ is of order $\llN^{-1}$ and each derivative loses a factor $\llN^{-1} \ll N^{1/2}$, so in the second comparison, the factor $|\psi|_{\1}^2 + |\psi|_{\0} |\psi|_{\2} + |\psi|_{\1} |\psi|_{\2} N^{-1/2}$ is of order $\llN^{-2}$.

\subsection{Proof of Claim \ref{claim:changeani}}
\label{sec:proofchangeani}.
\begin{proof}[Proof of Claim \ref{claim:changeani}]
  \corO{We again denote by} $\muep$
  the push-forward of $\muz$ by $\Teps$, and \corO{we} let $\YN = \vTeps(\XN)$. 
To prove the claim, we need to compare $\Ani\left(\XN, \muz, \frac{\nabla \phi}{\mmuz}\right)$ and $\Ani\left(\YN, \muz, \frac{\nabla \phi}{\mmuz}\right)$.

\corO{We} first compare $\Ani\left(\XN, \muz, \frac{\nabla \phi}{\mmuz}\right)$ with $\Ani\left(\YN, \muep, \frac{\nabla \phi}{\mmuz}\right)$. Applying \corO{\eqref{eq-sat6} of}
the second part of Lemma \ref{lem:effect}, with $\mu = \muz$, $\psi = \frac{\nabla \phi}{\mmuz}$ and $t = \frac{\epsilon}{N\omega}$, and using the scaling properties of $\phi$ and its derivatives we get:
\begin{equation}
\label{anitransporteB}
\left|\Ani\left(\YN, \muep, \frac{\nabla \phi}{\mmuz}\right) - \Ani\left(\XN, \muz, \frac{\nabla \phi}{\mmuz}\right) \right| \leq C(\bphi, \beta, \muz)  \frac{\epsilon \lN^{-4}}{N \omega} \EnergiePoints,
\end{equation}
where we use the notation $\EnergiePoints$ as introduced in Section \ref{sec:enerpoints}. It remains to compare $\Ani\left(\YN, \muep, \frac{\nabla \phi}{\mmuz}\right)$ (that we will denote here by $\AniYe$ for brevity) and $\Ani\left(\YN, \muz, \frac{\nabla \phi}{\mmuz}\right)$ (denoted below by $\AniYz$). The points stay the same but the reference measure changes. It will be convenient to use some notation:
\begin{itemize}
\item Let $\EYz$ denote the electric field corresponding to the charges in $\YN$ and the background of density $\mmuz$, namely
\begin{equation*}
\EYz(z) := \int_{\R^2} - \nabla \log |z - x| \left(\bYN - N \muz \right)(\d x).
\end{equation*}
\item Let $\EYe$ denote the electric field corresponding to the charges in $\YN$ and \textit{the modified background} $\muep$, namely:
\begin{equation*}
\EYe(z) := \int_{\R^2} - \nabla \log |z - x| \left(\bYN - N\muep \right)(\dx),
\end{equation*}
\item  We let $\EYze$, $\EYee$ be the corresponding truncated fields.
\end{itemize}
Using the expansion of $\muep$ stated in \eqref{muepexp}, the difference between $\EYz$ and $\EYe$ is fairly easy to compute. We get: 
\begin{equation} \label{EYzvsEYe}
\EYze(z)  -\EYee(z) = \EYz(z) - \EYe(z) = 2\pi \frac{\epsilon}{\omega} \nabla \varphi(z) + \Ocross(\epsilon^2),
\end{equation}
where we have used again the identity \eqref{identity}.

Going back to the definition \eqref{def:Ani} of $\Ani$ we see that (with $\psi = \frac{\nabla \phi}{\mmuz}$ for brevity)
\begin{multline*}
2\left(\AniYe - \AniYz\right) \\ 
=  \lim_{\eta \to 0^+} \frac{1}{2\pi} \int_{\R^2} \llangle \EYze  \left(2 \D \psi - \left(\div \psi \right) \id \right) \EYee - \EYze \rrangle + \llangle \EYee - \EYze , \left(2 \D \psi - \left(\div \psi \right) \id \right) \EYee \rrangle.
\end{multline*}
Both terms are bounded similarly. Hölder's inequality yields:
\begin{multline*}
 \int_{\R^2} \llangle \EYze  \left(2 \D \frac{\nabla \phi}{\mmuz} - \left(\div \frac{\nabla \phi}{\mmuz}\right) \id \right) \EYee - \EYze \rrangle \\
 \leq C(\muz) \|\EYze\|_{\Ll^1(\supp \phi)} |\phi|_{\2} \|\EYe - \EYz\|_{\Ll^{\infty}}.
\end{multline*}
Using \eqref{EYzvsEYe} and the scaling properties of $\phi$, which is supported on a disk of radius $\lN$, we get:
\begin{equation*}
\left| \AniYe - \AniYz \right| \leq C(\bphi, \muz) \frac{\epsilon}{\omega} \lN^{-3} \|\EYz\|_{\Ll^1(\supp \phi)}  + \Ocross(\epsilon^2)
\end{equation*}
Note that the $L^1$ norm of $\EYz$ is indeed finite. We can compare it to the electric energy by writing
\begin{multline}
\label{L1toL2}
\|\EYz\|_{\Ll^1(\supp \phi)} \leq \|\EYz_{\vr}\|_{\Ll^1(\supp \phi)} + \sum_{x_i \in \supp \phi} \| \nabla \ff_{\r(x_i)} \|_{\Ll^1} 
\\
\leq  \|\EYz_{\vr}\|_{\Ll^2(\supp \phi)} \left( \lN^{2} \right)^{1/2} + \Points(\supp \phi) N^{-1/2},
\end{multline} 
up to some universal multiplicative constant. In \eqref{L1toL2} we first have re-introduced a truncation (using the nearest-neighbor distance $\r$ defined in Section \ref{sec:electricfield}) and then used Hölder's inequality. The contribution of the truncation terms $\nabla \ff_{\r(x_i)}$ can be bounded explicitly, using the fact that $\r(x_i)$ is, by definition, always smaller than $N^{-1/2}$, and the elementary bound (see \cite[Sec. 3.3.]{SerfatyCLT}):
$\int_{\R^2} | \nabla \ff_{(\eta)} | \leq C \eta.$

We can thus write:
\begin{multline*}
\left| \Ani\left(\YN, \muep, \frac{\nabla \phi}{\mmuz}\right) - \Ani\left(\YN, \muz, \frac{\nabla \phi}{\mmuz}\right) \right| \\
\leq C(\bphi, \muz) \frac{\epsilon}{\omega}  \lN^{-3}  \left(\|\EYz_{\vr}\|_{\Ll^2(\supp \phi)} \left( \lN^{2} \right)^{1/2} + \Points(\supp \phi) N^{-1/2} \right)  + \Ocross(\epsilon^2).
\end{multline*}
The $L^2$ norm of the truncated electric field corresponds to the square root of the electric energy as defined in \eqref{def:Ele}, so we write
$$
\|\EYz_{\vr}\|_{\Ll^2(\supp \phi)} \left( \lN^{2} \right)^{1/2} = \frac{\sqrt{\Ele(\supp(\phi))}}{N^{1/4}} (\lN N^{1/4}) \leq \frac{\Ele(\supp(\phi))}{N^{1/2}} + \lN^2 N^{1/2}, 
$$
where we expect both terms to be of the same size. 
Combining with \eqref{anitransporteB} and keeping only the dominant terms, we obtain:
\begin{equation*}
\left|\Ani\left(\YN, \muz, \frac{\nabla \phi}{\mmuz}\right) - \Ani\left(\XN, \muz, \frac{\nabla \phi}{\mmuz}\right)\right| 
\leq 
C(\bphi, \beta, \muz)  \frac{\epsilon}{\omega} \frac{\lN^{-3}}{N^{1/2}} \left( \EnergiePoints + N \lN^2 \right) + \Ocross(\epsilon^2)
\end{equation*}
which proves the claim.
\end{proof}

\subsection{A priori bounds}
\label{sec:aprioribounds}
\begin{lemma}[Fluctuations are bounded \corO{in $L^1,L^2$}]
\label{lem:FluctuationsBounded}
Assume $\{\xi_N\}_N$ is a sequence of test functions supported on a disk of radius $\lN$, and such that $|\xi_N|_{\kk} \leq M \lN^{-\kk}$ for $\kk \leq 3$, with a uniform constant $M >1$. Then the sequence of random variables $\left\lbrace \langle \xi_N, \LN \rangle \right\rbrace_N$ is bounded in $\Ll^1(\PNbeta)$, $\Ll^2(\PNbeta)$ by some constant (depending only on $\beta, \muz$) times $M$.
\end{lemma}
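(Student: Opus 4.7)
The plan is to establish a sub-Gaussian bound on the Laplace transform $\Phi_N(s) := \Esp\left[ e^{s \langle \xi_N, \LN \rangle} \right]$, uniformly in $N$ and for $s$ in a bounded interval, and then to deduce the $L^1$ and $L^2$ bounds by Markov's inequality. To estimate $\Phi_N$, I would mimic the transport computation of Section \ref{sec:proofdecay}, now with a real parameter $s$ in place of $i\omega$ and applied to the test function $\xi_N$. Specifically, define $\Teps(x) := x + \frac{\epsilon}{N}\, \nabla \xi_N(x)/\mmuz(x)$ and perform the change of variables $\YN = \vTeps(\XN)$ in the integral defining $\Phi_N(s)$. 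The expansions of Claims \ref{claim:termAA}, \ref{claim:termBB} and \ref{claim:termCC} apply verbatim, since they are polynomial identities in $\epsilon$; collecting the coefficient of $\epsilon^1$ yields a differential identity of the form
\begin{equation*}
\Phi_N'(s) \;=\; \Big( -\tfrac{s}{2\pi\beta}\, \|\xi_N\|_{H^1}^2 + B \Big)\, \Phi_N(s) + \mathsf{Err}(s),
\end{equation*}
where $B = B(\xi_N, \muz, \beta)$ is a deterministic constant of size $O(M^2)$ coming from the Jacobian and the log-density contribution $\int \Delta \xi_N \log \mmuz$, and $\mathsf{Err}(s)$ collects the expectation against $e^{s\langle\xi_N,\LN\rangle}$ of the anisotropy term $\Ani(\XN, \muz, \nabla \xi_N/\mmuz)$ and of the small-scale fluctuation remainders.

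Next I would quantify the main term and the error using only the scaling $|\xi_N|_\kk \leq M \lN^{-\kk}$ for $\kk \leq 3$ together with the a priori energy and anisotropy bounds of Lemma \ref{lem:aprioribounds}. The Dirichlet integral $\|\xi_N\|_{H^1}^2$ is at most $C(\muz)\, M^2$, since $\xi_N$ has support of area $O(\lN^2)$ and $|\nabla \xi_N|\leq M\lN^{-1}$. A Cauchy--Schwarz bound combined with the $L^2(\PNbeta)$ estimate $|\Ani| \leq C\, N$ gives $|\mathsf{Err}(s)| \leq C(\muz, \beta)\, M^2\, \Phi_N(2s)^{1/2}$. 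A short bootstrap on a small interval $|s| \leq s_0$ then yields a Gronwall-type inequality $|\Phi_N'(s)| \leq C(\muz, \beta)(1 + |s|)\, M^2\, \Phi_N(s)$, which integrates to $\Phi_N(s) \leq \exp(C M^2 s^2 + C' M^2 |s|)$. Markov's inequality produces the sub-Gaussian tail $\PNbeta[\,|\langle \xi_N, \LN \rangle| \geq t\,] \leq 2\exp(-c\, t^2/M^2)$ for large $t$, from which the $L^1$ and $L^2$ bounds by a constant times $M$ follow by integration of the tail.

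The main obstacle I anticipate is the treatment of the error term $\mathsf{Err}(s)$, which contains the anisotropy and the small-scale fluctuation remainders. In Section \ref{sec:proofdecay} those are controlled via Lemma \ref{lem:FluctuationsBounded} itself, so here one cannot invoke the Lemma without circularity; the remedy is to rely only on the energy/number-of-points bound of Lemma \ref{lem:aprioribounds} (at the price of a worse constant), combined with a bootstrap that starts from $\Phi_N(0)=1$ and extends to a small uniform interval $|s|\leq s_0$. Alternatively, one may simply invoke the Laplace-transform / cumulant bounds embedded in the CLT proofs of \cite{LebSerCLT,BBNY}, which already deliver the stated moment estimates in the mesoscopic regime under the same regularity hypotheses on $\xi_N$.
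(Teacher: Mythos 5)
Your plan takes a genuinely different route from the paper, whose proof of Lemma~\ref{lem:FluctuationsBounded} is a one-liner: it cites \cite[Corollary 2.1]{SerfatyCLT}, which already provides exponential moment bounds for $\langle\xi_N,\LN\rangle$ with explicit dependence on $M$ under exactly these hypotheses, and then applies Jensen's inequality. Your \emph{second} suggestion (``invoke the Laplace-transform / cumulant bounds embedded in the CLT proofs'') is precisely this, and is the intended argument. Your \emph{primary} plan, by contrast, proposes to re-derive the exponential moment estimate from scratch via the transport method, and while that is broadly the mechanism by which such bounds are proved in the literature, the sketch has real gaps that you cannot simply wave at.

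Concretely, two points need genuine work. First, after Cauchy--Schwarz your error term carries a factor $\Phi_N(2s)^{1/2}$, so you do not have a Gronwall inequality in the usual form; closing the loop requires a self-improving bootstrap (restrict to $|s|\le s_0$, dominate $\Phi_N(2s)$ by a provisional bound, re-insert, iterate), which is delicate and is not a ``short'' step. Second, and more importantly, the error term also contains the fluctuation remainders $\frac{1}{N}\llangle\frac{|\nabla\xi_N|^2}{\mmuz},\LN\rrangle$ and $\frac{1}{N}\llangle\div(\frac{\nabla\xi_N}{\mmuz}),\LN\rrangle$. You correctly identify that bounding these via Lemma~\ref{lem:FluctuationsBounded} would be circular, and propose to use Lemma~\ref{lem:aprioribounds} instead --- but that lemma controls the electric energy, the number of points, and the anisotropy; it says nothing directly about linear statistics. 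Passing from energy/points bounds to a bound on $\langle f,\LN\rangle$ requires an extra argument (integration by parts against the truncated electric field plus H\"older, as in \eqref{L1toL2}), yielding only a crude bound of order $M^2\,N^{1/2}\lN^{-1}$ rather than the $M^2\lN^{-2}$ that Lemma~\ref{lem:FluctuationsBounded} gives. That crude bound is indeed $o(N)$ when $\lN\gg N^{-1/2}$, so the scheme is not doomed, but you have to supply this step explicitly rather than attribute it to Lemma~\ref{lem:aprioribounds}. In short: the re-derivation is morally sound but substantially incomplete, and it reproves a result that the paper simply imports. There is also a sign slip in your differential identity (the coefficient of $s\|\xi_N\|_{H^1}^2$ should be $+\frac{1}{2\pi\beta}$, consistent with a moment generating function that grows in $s$), though this does not affect the absolute-value Gronwall bound you ultimately use.
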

\begin{proof}
This is a consequence of \cite[Corollary 2.1]{SerfatyCLT}, which proves in fact boundedness of the \textit{exponential moments}, with an explicit bound in terms of the constant $M$. In particular, taking the parameter $\tau = \frac{1}{CM}$ in \cite[equation (2.21)]{SerfatyCLT} and applying Jensen's inequality yields a control by (some constant times) $M$ in $\Ll^1$ and $\Ll^2$ norm.
\end{proof}

\begin{lemma}[Bound on the electric energy, the number of points, the anisotropy]
\label{lem:aprioribounds}
Let $\Omega$ be a disk of radius $r$ contained in the interior of $\Sigma$ (the support of $\muz$). Then
\begin{enumerate}
\item $\Ele(\Omega)$ is bounded in $\Ll^1(\PNbeta)$, $\Ll^2(\PNbeta)$ by some constant (depending only on $\beta, \muz$)  times $N r^2$.
\item  $\Points(\Omega)$ is bounded in $\Ll^1(\PNbeta)$, $\Ll^2(\PNbeta)$ by some constant (depending only on $\beta, \muz$)  times $N r^2$.
\item If $\psi$ is $C^1$ and supported on $\Omega$, then $\Ani(\XN, \muz, \psi)$  is bounded in $\Ll^1(\PNbeta)$, $\Ll^2(\PNbeta)$ by some constant (depending only on $\beta, \muz$)  times $|\psi|_{\1} N r^2$.
\end{enumerate}
Moreover the constants are uniform for disks at a given distance of the boundary $\partial \Sigma$.
\end{lemma}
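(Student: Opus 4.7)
The three estimates are standard ``local law''-type bounds for the 2D Coulomb gas, and I would reduce them to existing results --- most notably the local law for the screened electric energy from \cite{SerfatyCLT}. The main obstacle throughout is that one needs genuine $\Ll^1,\Ll^2$ bounds, not just high-probability estimates, which requires enough tail control on the relevant observables; this is precisely where \cite{SerfatyCLT} is crucial. Uniformity in the distance to $\partial \Sigma$ follows from the uniformity of the constants in those local laws, which in turn rests on $\mmuz \geq \cmin > 0$ and $\mmuz \in C^3$.

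\textbf{Number of points (part (2)).} Since $\mmuz$ is bounded and $\Omega$ sits strictly inside $\Sigma$, one has $\Esp[\Points(\Omega)] \leq C N r^2$, which already gives the $\Ll^1$ bound. For the $\Ll^2$ bound I would use
\[
\Esp[\Points(\Omega)^2] = \Esp[\Points(\Omega)] + \iint_{\Omega \times \Omega} \rho_{N}^{(2)}(x,y)\, \dx\, \d y,
\]
together with the standard pointwise bound $\rho_{N}^{(2)} \leq C N^2$ for 2D Coulomb gases in the bulk, yielding $\|\Points(\Omega)\|_{\Ll^2} \leq C N r^2$ in the regime $N r^2 \gtrsim 1$, which is the relevant one in the applications of the lemma.

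\textbf{Electric energy (part (1)).} I would invoke the local law from \cite{SerfatyCLT} --- the same ingredient underlying Lemma \ref{lem:FluctuationsBounded} here --- which gives, for $\Omega$ at positive distance from $\partial\Sigma$, exponential-moment control on $\Ele(\Omega)/(N r^2)$, with constants depending only on $\mmuz$, $\beta$ and that distance. Jensen's inequality, applied as in the proof of Lemma \ref{lem:FluctuationsBounded}, then converts this into $\Ll^1, \Ll^2$ bounds of order $N r^2$.

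\textbf{Anisotropy (part (3)).} Using the decomposition $\A = \Ani + \frac{1}{4} \sum_{i=1}^{N} \div \psi(x_i)$ from \eqref{def:A1} and the non-asymptotic representation of $\A$ in \cite[(4.11)]{SerfatyCLT} as a bounded quadratic form in $\El_{\vr}$ with kernel of norm $O(|\psi|_{\1})$ (plus a correction from the near-field truncation bounded by $C|\psi|_{\1} \Points(\supp\psi)$), one obtains $|\A(\XN, \muz, \psi)| \leq C\,|\psi|_{\1}\,(\Ele(\supp\psi) + \Points(\supp\psi))$. Combined with the trivial estimate $\left|\sum_i \div\psi(x_i)\right| \leq |\psi|_{\1}\, \Points(\supp\psi)$, this gives $|\Ani(\XN,\muz,\psi)| \leq C |\psi|_{\1} (\Ele(\supp\psi) + \Points(\supp\psi))$, and parts (1)--(2) close the argument.
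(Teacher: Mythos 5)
Your overall strategy matches the paper's: reduce everything to the local laws of \cite{LebLL, BBNYA, AS} (summarized in \cite[Proposition 3.7]{SerfatyCLT}), and for the anisotropy combine the non-asymptotic representation of $\A$ from \cite[Section 4]{SerfatyCLT} with parts (1) and (2). Your treatment of parts (1) and (3) is essentially what the paper does, and the decomposition $\A = \Ani + \tfrac14\sum_i\div\psi(x_i)$ together with the quadratic-form bound is the right picture.

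Where you deviate is part (2). The paper simply cites the local laws, which give exponential-moment control on $\Points(\Omega)/(Nr^2)$ for $r\gtrsim N^{-1/2}$ --- from which the $\Ll^1,\Ll^2$ bounds follow by Jensen exactly as you argued for $\Ele$ in part (1); it would have been more economical (and consistent) to use the same ingredient here. Instead you go through the two-point correlation function $\rho_N^{(2)}$ and invoke a ``standard pointwise bound $\rho_N^{(2)}\leq CN^2$ in the bulk.'' At general $\beta>0$ in dimension two this pointwise bound is not an off-the-shelf result in the references cited (those references control integrated quantities such as point counts, not $\rho_N^{(2)}$ pointwise), so as stated this step leans on a fact that would itself need justification; it also forces you to add the caveat $Nr^2\gtrsim 1$, which is not in the lemma's statement (though it is harmless in the applications, where $r=\lN\gg N^{-1/2}$). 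I'd recommend replacing this step by the direct citation of the local law for $\Points(\Omega)$, which removes both the extra ingredient and the caveat.
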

\begin{proof}
The first two points follow from the \textit{local laws} for two-dimensional Coulomb gases, obtained in \cite{LebLL, BBNYA} and refined in \cite{AS}, see also \cite[Proposition 3.7]{SerfatyCLT} for a summary. The bound on the anisotropy can be inferred from combining \cite[Proposition 4.3]{SerfatyCLT} and the aforementioned controls on both the electric energy and the number of points.
\end{proof}

\bibliographystyle{alpha}
\bibliography{LocalCLT}

\end{document}